\providecommand{\U}[1]{\protect\rule{.1in}{.1in}}
\newtheorem{thm}{Theorem}
\newtheorem{cor}[thm]{Corollary}
\newtheorem{example}[thm]{Example}
\newtheorem{lem}[thm]{Lemma}
\newtheorem{rem}[thm]{Remark}
\newenvironment{proof}[1][Proof]{\noindent\textbf{#1.} }{\ \rule{0.5em}{0.5em}}
\begin{document}

\title{{Optimal stopping of McKean-Vlasov diffusions via regression on particle
systems}}
\author{Denis Belomestny\thanks{{\footnotesize Duisburg-Essen University. Email:
\texttt{denis.belomestny@uni-due.de}}} \ and John
Schoenmakers\thanks{{\footnotesize Weierstrass Institute of Applied
Mathematics. Email: \texttt{schoenma@wias-berlin.de}}}\\\vspace{20pt} }
\maketitle

\begin{abstract}
In this paper we study  optimal stopping problems
for nonlinear Markov processes driven by a McKean-Vlasov SDE and  aim at  solving  them numerically by
Monte Carlo. To this end we propose a novel regression  algorithm based on the
corresponding particle system and prove its convergence. The proof of
convergence is based on  perturbation analysis of a related linear regression
problem. The performance of the proposed algorithms is illustrated by a
numerical example.

\end{abstract}

\section{Introduction}

Numerical solution of multidimensional optimal stopping problems remains an
important and active area of research with applications in finance, operations research
and control. As the underlying models are getting more and more complex, the
computational issues are becoming more relevant than ever. As a matter of
fact, analytic and usual finite difference methods for solving optimal
stopping problems deteriorate in high-dimensional problems. Therefore
attention has turned to probabilistic approaches, based on Monte Carlo based
approximative dynamic programming. Historically, one of the first motivating
examples was the pricing of an American (or Bermudan) put option in a
Black-Scholes model. Not much later, many American options that showed up
involved high dimensional underlying processes, which has led to the
development of several Monte Carlo based methods in the last decades (see e.g.
\cite{Gl}). Pricing American derivatives, hence solving optimal stopping
problems, via Monte Carlo has always been viewed as a challenging task,
because it requires backward dynamic programming that seems to be incompatible
with the forward structure of Monte Carlo methods. In particular much research
was focused on the development of fast methods to compute approximations to
the optimal exercise policy. The seminal paper of Longstaff and
Schwartz~\cite{J_LS2001} proposed to use a cross-sectional regression over a
Monte Carlo sample to compute the conditional expectations involved in the
dynamic programming algorithm. The key innovation in \cite{J_LS2001} was the
use of the estimated conditional expectations for decision-making, rather than
quantification of expected gains. This corresponds to replacing a regression
problem with a classification one, see \cite{egloff2005monte}. Originally
designed for the setup of American option pricing, this algorithm, which we
term RMC (Regression Monte Carlo), has become widely accepted in financial
mathematics, insurance and dynamic programming settings. It has also been
implemented in many proprietary valuation systems employed by the financial
industry. The great success of RMC is due to its flexible and simple
implementation as well as its strong empirical performance. Other eminent
examples of fast approximation methods include the functional optimization
approach of \cite{A}, the mesh method of \cite{BG}, the regression-based
approaches of \cite{Car}, \cite{J_TV2001}, \cite{egloff2005monte} and
\cite{B1}.
\par
In this paper we propose and study a simulation based method for solving an
optimal stopping problem for nonlinear Markov processes of the McKean-Vlasov type.
In spite of  extensive literature on stochastic particle systems
corresponding to MV-SDEs, generic numerical methods for solving optimal
stopping problems in this context are hard to find (to the best of our
knowledge). Our study is motivated by the recent theoretical developments in
control problems for MV-SDEs and recent applications of MV-SDEs in financial
mathematics. In this respect we mention the recent work of
\cite{pham2017dynamic} (see also the references therein), where a general form
of the Bellman principle is derived for the optimal control problems  under
McKean-Vlasov dynamics.
\par
Because of  dependence of the transition kernels on the marginal distributions, nonlinear
Markov processes can not be sampled like standard diffusion processes. Instead the so-called
interacting particle method combined with time discretisation is used to approximate  them. However,
unlike the standard Monte Carlo, the simulated particles are not independent. The
key result in the theory of interacting particle systems is the so-called propagation of
chaos result showing that the particles fulfil a kind of law of large numbers. In
particular, one can prove strong convergence of the interacting particle system to the
solution of the original McKean-Vlasov equation. Here we propose a fast
approximation method for optimal stopping problems related to (generally
multidimensional) MV- SDEs in spirit of the celebrated RMC method by Longstaff and Schwartz,
which is based on the underlying particle system of essentially dependent particles,
rather than a Monte Carlo sample of independent trajectories as in
\cite{J_LS2001}. In this respect one can speak about the Particle-Regression-Monte-Carlo
(PRMC) method. The convergence of this method is proved via a perturbation
analysis of a related linear regression problem due to an i.i.d. sample of the
original MV-SDE, and a recursive error analysis of the backward induction algorithm (in the spirit
of \cite{J_BelKolSch} and \cite{Z} for the case of independent samples). From
a mathematical point of view, this analysis may be considered as the main
contribution of the present paper.
Summing up, we provide a generic simulation based numerical approach for
solving optimal stopping problems with respect to a reward that depends on a 
 process following  multidimensional MV-SDE. Although this
problem may be relevant in a financial context, we note that  financial
terminology used in this paper is merely chosen for illustrative purposes, and
that the application scope of the developed methods is not restricted to finance.
\par
The structure of the paper is as follows. The general setup for optimal
stopping in a Markovian environment is presented in Section~\ref{mainsetup}.
In this section we also give a concise recap of the Longstaff-Schwartz and
Tsitsiklis van Roy method developed in \cite{J_LS2001} and \cite{J_TV2001},
respectively. Section~\ref{secMV} introduces Mckean-Vlasov equations and their
connection with particle systems, while Section~\ref{ApprDP} introduces a
particle version of regression based backward dynamic programming in the
spirit of \cite{J_LS2001}. In Section~\ref{regpart} we present one of our main
results, Theorem~\ref{mainth}, that deals with the convergence of the
regression approach applied to (generally dependent) particles. The
convergence of the PRMC algorithm is studied in Section~\ref{conver}.
Before proceeding to a rather general perturbation analysis in
Section~\ref{pertan} and proving Theorem~\ref{mainth} and
Theorem~\ref{thm_main} in Section~\ref{proofs}, we present some numerical
experiments in Section~\ref{numsec}.

\section{Optimal stopping, backward dynamic program}

\label{mainsetup} Let us explain the issue of optimal stopping in the context
of American options as a popular illustration. An American option grants its
holder the right to select time at which she exercises the option, i.e., calls
a pre-specified reward or cash-flow. This is in contrast to a European option
that may be exercised only at a fixed date. A general class of American option
pricing problems, i.e., optimal stopping problems respectively, can be
formulated with respect to an underlying $\mathbb{R}^{d}$-valued Markov
process $X$ $:=$ $\{X_{t},\,0\leq t\leq T\}$ defined on a filtered probability
space $(\Omega,\mathcal{F},(\mathcal{F}_{t})_{0\leq t\leq T},\mathrm{P})$ .
The process $X$ is assumed to be adapted to a filtration $(\mathcal{F}%
_{t})_{0\leq t\leq T}$ in the sense that each $X_{t}$ is $\mathcal{F}_{t}$
measurable. Recall that each $\mathcal{F}_{t}$ is a $\sigma$ -algebra of
subsets of $\Omega$ such that $\mathcal{F}_{s}\subseteq\mathcal{F}_{t}$ for
$s\leq t.$ In general, $X$ may describe any underlying physical or economical
quantity of interest such as, for example, the outside temperature, some (not
necessarily tradable) market index or price. Henceforth we restrict our
attention to the case where only a finite number $\mathcal{J}$ of stopping
(exercise) opportunities $0<t_{1}<t_{2}<\ldots<t_{\mathcal{J}}=T$ are allowed
(Bermudan options in financial terms). In this respect it should be noted that
a continuous exercise (American) option can be approximated by such a Bermudan
option with arbitrary accuracy, and so this is not a huge restriction. We now
consider a pre-specified reward $g_{j}(Z_{j})$ in terms of the discrete time
Markov chain
\[
Z_{i}:=X_{t_{i}},\quad i=0,\ldots,\mathcal{J},
\]
with $Z_{0}:=X_{t_{0}}:=X_{0},$ for some given functions $g_{1},\ldots
,g_{\mathcal{J}}$ mapping $\mathbb{R}^{d}$ into $[0,\infty).$ Note that for
technical convenience and without loss of generality we exclude $t_{0}=0$ from
the set of exercise dates. In a financial context we may assume that the
reward $g_{j}(Z_{j})$ is expressed in units of some (tradable) pricing
num\'{e}raire that has initial value $1$ Euro, say. That is, if exercised at
time $t_{j},\,j=1,\ldots,\mathcal{J}$, the option pays cash equivalent to
$g_{j}(Z_{j})$ units of the num\'{e}raire. Let $\mathcal{T}_{j}$ denote for
$j=0,...,\mathcal{J}$ the set of stopping times taking values in
$\{j,j+1,\ldots,\mathcal{J}\}\setminus\{0\}.$ In the spirit of contingent
claim pricing in finance we then assume that, for $j=0,...,\mathcal{J},$ a
fair price $V_{j}^{\ast}(z)$ of the corresponding Bermudan option at time
$t_{j}$ in state $z,$ given that the option was not exercised prior to $t_{j}%
$, is its value under the optimal exercise policy,
\begin{equation}
V_{j}^{\ast}(z)=\sup_{\tau\in\mathcal{T}_{j}}\mathsf{E}[g_{\tau}(Z_{\tau
})|Z_{j}=z]=\mathsf{E}[g_{\tau_{j}^{\ast}}(Z_{\tau_{j}^{\ast}})|Z_{j}=z],\quad
z\in\mathbb{R}^{d}, \label{stop}%
\end{equation}
hence the solution to an optimal stopping problem. In (\ref{stop}) we
introduced an optimal stopping family (policy), which can be also expressed in
the form
\begin{align}
\tau_{j}^{\ast} :=\min\left\{  j\leq l\leq\mathcal{J}:g_{l}(Z_{l})\geq
C_{l}^{\ast}(Z_{l})\right\}  , \label{sf}%
\end{align}
where
\begin{align}
C_{j}^{\ast}(z) :=\mathsf{E}[V_{j+1}^{\ast}(Z_{j+1})|Z_{j}=z],\quad
j=1,\ldots,\mathcal{J}-1,\text{ \ \ and \ \ }C_{\mathcal{J}}^{\ast}(z)\equiv0,
\label{cf}%
\end{align}
are so-called continuation functions. The process $V_{j}^{\ast}(Z_{j})$ is
called the \textit{Snell envelope} of the reward process $(g_{k}(Z_{k})).$
Both the stopping family (\ref{sf}) and the set of continuation functions
(\ref{cf}) satisfy the well-known \textit{Dynamic Programming Principle}. In
particular, for (\ref{sf}) we have the backward recursion
\begin{align*}
\tau_{\mathcal{J}}^{\ast}  &  =\mathcal{J},\\
\tau_{j}^{\ast}  &  =j\,1_{\left\{  g_{j}(Z_{j})\geq C_{j}^{\ast}%
(Z_{j})\right\}  }+\tau_{j+1}^{\ast}1_{\left\{  g_{j}(Z_{j})<C_{j}^{\ast
}(Z_{j})\right\}  },\quad j=1,\ldots,\mathcal{J}-1,
\end{align*}
and for (\ref{cf}) it holds,
\begin{align}
C_{\mathcal{J}}^{\ast}(z)  &  \equiv0,\label{dyn}\\
C_{j}^{\ast}(z)  &  =\mathsf{E}[g_{\tau_{j+1}^{\ast}}(Z_{\tau_{j+1}^{\ast}%
})|Z_{j}=z]\nonumber\\
&  =\mathsf{E}[\max(g_{j+1}(Z_{j+1}),C_{j+1}^{\ast}(Z_{j+1}))|Z_{j}=z],\quad
j=1,\ldots,\mathcal{J}-1.\nonumber
\end{align}
A common feature of almost all existing fast approximation algorithms is that
they deliver estimates $C_{N,1}(z),\ldots,C_{N,\mathcal{J}-1}(z)$ for the
continuation functions $C_{1}^{\ast}(z),$ $...,$ $C_{\mathcal{J}-1}^{\ast
}(z).$ Here the index $N$ indicates that the above estimates are based on the
set of independent \textquotedblleft training\textquotedblright\ trajectories
$\bigl(Z_{0}^{(i)},\ldots,Z_{\mathcal{J}}^{(i)}\bigr),$ $i=1,\ldots,N,$ all
starting from one point $Z_{0}$ i.e., $Z_{0}$ $=$ $Z_{0}^{(1)}=\ldots
=Z_{0}^{(N)}.$ In the case of regression methods the estimates for the
continuation values are obtained via regression based Monte Carlo. For
example, at step $\mathcal{J}-j$ one may estimate the expectation
\begin{equation}
\mathsf{E}[\max(g_{j+1}(Z_{j+1}),C_{N,j+1}(Z_{j+1}))\bigr)Z_{j}=z]
\label{regr_aim}%
\end{equation}
via linear regression based on the set of paths
\begin{equation}
\bigl(Z_{j}^{(i)},\max\{g_{j+1}(Z_{j+1}^{(i)}),C_{N,j+1}(Z_{j+1}%
^{(i)})\}\bigr),\quad i=1,\ldots,N, \label{TVr}%
\end{equation}
where $C_{N,j+1}(z)$ is the estimate for $C_{j+1}^{\ast}(z)$ obtained in the
previous step, and $C_{N,\mathcal{J}}(z):=0.$ This approach is basically the
method by Tsitsiklis and van Roy \cite{J_TV2001}. Alternatively, in the
so-called Longstaff-Schwarz algorithm, one mixes the estimates of the
continuation values with the corresponding estimates of the stopping times.
More precisely, on each trajectory $i=1,...,N,$ approximate stopping times
$\tau_{N,j}^{(i)},$ $j=1,\ldots,\mathcal{J},$ are recursively constructed by
first initializing $\tau_{N,\mathcal{J}}^{(i)}=\mathcal{J},$ for all $i.$
Then, once $\tau_{N,j+1}^{(i)},$ $i=1,...,N,$ is constructed for
$j<\mathcal{J},$ one computes from the sample
\begin{equation}
\bigl(Z_{j}^{(i)},g_{\tau_{N,j+1}^{(i)}}(Z_{\tau_{N,j+1}^{(i)}}^{(i)}%
)\bigr),\quad i=1,\ldots,N, \label{LSr}%
\end{equation}
an estimate $C_{N,j}(z)$ of the continuation function $C_{j+1}^{\ast}(z)$ by
projection on the linear span of a set of basis functions. Subsequently, the
approximate stopping times corresponding to exercise date $j$ on trajectories
$i=1,...,N,$ are defined via,
\begin{equation}
\tau_{N,j}^{(i)}=j\,1_{\bigl\{g_{j}(Z_{j}^{(i)})\geq C_{N,j}(Z_{j}%
^{(i)})\bigr\}}+\tau^{(i)}_{N,j+1}1_{\bigl\{g_{j}(Z_{j}^{(i)})<C_{N,j}%
(Z_{j}^{(i)})\bigr\}}. \label{tau}%
\end{equation}
Working all the way back, we thus obtain again a set of approximate
continuation functions $C_{N,1}(z),\ldots,C_{N,\mathcal{J}-1}(z)$ (note that
$C_{N,\mathcal{J}}(z)\equiv0$).

\begin{rem}
It should be noted that the algorithm based on (\ref{LSr}) is more popular
than the one based on (\ref{TVr}), because it behaves more stable in practice,
particularly when the number of exercise dates is getting very large.
Intuitively, this can be explained by the fact that the regression (\ref{LSr})
is always carried out on actually realized cash-flows, rather than on
approximative value functions as in (\ref{TVr}) which may become quite
unprecise because of error propagation due to a huge number of exercise dates,
see for example \cite{B1} for more rigorous arguments.
\end{rem}

Given the estimates $C_{N,1},\ldots,C_{N,\mathcal{J}-1}$, constructed by one
of the fast approximation methods above, we can construct a lower bound (low
biased estimate) for $V_{0}^{\ast}$ using the (generally suboptimal) stopping
rule:
\[
\tau_{N}=\min\bigl\{1\leq j\leq\mathcal{J}:g_{j}(Z_{j})\geq C_{N,j}%
(Z_{j})\bigr\}
\]
with $C_{N,\mathcal{J}}\equiv0$ by definition. Indeed, fix a natural number
$N_{\text{test}}$ and simulate $N_{\text{test}}$ new independent trajectories
of the process $Z.$ A low-biased estimate for $V_{0}^{\ast}$ can then be
defined as
\[
V_{0}^{N_{\text{test}},N}=\frac{1}{N_{\text{test}}}\sum_{r=1}^{N_{\text{test}%
}}g_{\tau_{N}^{(r)}}\bigl(Z_{\tau_{k}^{(r)}}^{(r)}\bigr)
\]
with
\[
\tau_{N}^{(r)}=\inf\Bigl\{1\leq j\leq\mathcal{J}:g_{j}(Z_{j}^{(r)})\geq
C_{N,j}(Z_{j}^{(r)})\Bigr\},\quad r=1,\ldots,N_{\text{test}}.
\]

\section{McKean-Vlasov equations and particle systems}

\label{secMV}

Let $[0,T]$ be a finite time interval and $(\Omega,\mathcal{F},\mathrm{P})$ be
a complete probability space, where a standard $m$-dimensional Brownian motion
$W$ is defined. We consider a class of McKean-Vlasov stochastic differential
equations (MVSDE), i.e., stochastic differential equations whose drift and
diffusion coefficients may depend on the current distribution of the process,
of the form:
\begin{equation}
\left\{
\begin{array}
[c]{ll}%
X_{t} & =\xi+\int_{0}^{t}\int_{\mathbb{R}^{d}}a(X_{s},y)\mu_{s}(dy)ds+\int
_{0}^{t}\int_{\mathbb{R}^{d}}b(X_{s},y)\mu_{s}(dy)dW_{s}\\
\mu_{t} & =\mathrm{Law}(X_{t}),\quad t\geq0,\quad X_{0}\sim\mu_{0}%
\end{array}
\right.  \label{eq:sde}%
\end{equation}
where $\mu_{0}$ is a distribution in $\mathbb{R}^{d},$ $a:\,\mathbb{R}%
^{d}\times\mathbb{R}^{d}\rightarrow\mathbb{R}^{d}$ and $b:\,\mathbb{R}%
^{d}\times\mathbb{R}^{d}\rightarrow\mathbb{R}^{d\times m}.$ A popular way of
simulating the MVSDE\ (\ref{eq:sde}) is to sample the so-called particle
system $\mathbf{X}_{t}^{N}=\bigl(X_{t}^{1,N},\ldots,X_{t}^{N,N}\bigr)\in
\mathbb{R}^{d\times N}$ from $N\times d$-dimensional SDE
\begin{equation}
X_{t}^{i,N}=\xi^{i}+\frac{1}{N}\sum_{j=1}^{N}\int_{0}^{t}a(X_{s}^{i,N}%
,X_{s}^{j,N})\,ds+\frac{1}{N}\sum_{j=1}^{N}\int_{0}^{t}b(X_{s}^{i,N}%
,X_{s}^{j,N})\,dW_{s}^{i} \label{eq:par}%
\end{equation}
for $i=1,\ldots,N,$ where $\xi^{i},$ $i=1,\ldots,N,$ are i.i.d copies of a
r.v. $\xi,$ distributed according the law $\mu_{0},$ and $W^{i},$ $i=1,...,N,$
are independent copies of $W.$ Under suitable assumptions (see, for example
~\cite{antonelli2002rate}) one has that%
\begin{equation}
\left\Vert \sup_{0\leq r\leq T}\left\vert X_{r}^{\cdot,N}-X_{r}^{\cdot
}\right\vert \right\Vert _{p}\leq C_{p}N^{-1/2}. \label{Koh}%
\end{equation}
In practice, $N\times d$-dimensional SDE system (\ref{eq:par}) cannot be
solved analytically either and one has to approximate its solution by a
suitable numerical integration scheme such as the Euler method, leading to a
next approximation $\mathbf{X}_{t}^{N,\delta}=\bigl(X_{t}^{1,N,\delta}%
,\ldots,X_{t}^{N,N,\delta}\bigr)$ with $\delta$ being the size of the each
Euler time step. Following \cite{antonelli2002rate}, one then has%
\begin{equation}
\left\Vert \sup_{0\leq r\leq T}\left\vert X_{r}^{\cdot,N,\delta}-X_{r}%
^{\cdot,N}\right\vert \right\Vert _{p}\lesssim\sqrt{\delta}, \label{Euler}%
\end{equation}
where $\lesssim$ involves a constant that does not depend on $N$ and $\delta.$

\begin{rem}
\label{exact} In order to fix the main ideas and to avoid a notational blow
up, we assume in this paper that the system $\mathbf{X}_{t}^{N}$ (cf.
(\ref{eq:par})) is constructed exactly, hence we neglect the numerical
integration error (\ref{Euler}). On the other hand, due to (\ref{Euler}) it
will be clear how several results in this paper have to be adapted in the case
where (\ref{eq:par}) is approximated using the Euler scheme.
\end{rem}

\begin{rem}
{In fact, the solution to the MVSDE (\ref{eq:sde}) may be considered as a
usual non-autonomous, Markovian diffusion, since $\left\{  \mu_{s}:0\leq s\leq
T\right\}  $ is some deterministic flow of distributions, although not
explicitly known beforehand. Therefore, we may consider the stopping problem
(\ref{stop}) with respect to the solution (\ref{eq:sde}), while the standard
notions of the Snell envelope and the Dynamic Programming Principle still
apply. However, in contrast to the standard diffusion processes $X,$ where
independent trajectories of $X$ may be simulated straightforwardly by Monte
Carlo, simulating of independent copies of (\ref{eq:sde}) is not directly
possible. As a way out, we will work with the particle system (\ref{eq:par})
of \textit{dependent} particles, instead of an ensemble of independent
trajectories of (\ref{eq:sde}).}
\end{rem}

\section{Dynamic programming on particle systems}

\label{ApprDP} In this section we describe a particle version of the
Longstaff-Schwarz regression algorithm due to (\ref{LSr}) and (\ref{tau}).
First we run the particle system $\mathbf{X}_{t}^{N}=\bigl(X_{t}^{1,N}%
,\ldots,X_{t}^{N,N}\bigr)\in\mathbb{R}^{d\times N}$ as described above and
set
\begin{equation}
Z_{j}^{i,N}=X_{j\Delta}^{i,N},\quad j=0,\ldots,\mathcal{J},\quad i=1,\ldots,N,
\label{Zt}%
\end{equation}
with $\mathcal{J}=\lfloor T/\Delta\rfloor.$ It should be noted that, unlike
Monte Carlo, the trajectories (\ref{Zt}) are generally \textit{dependent}. We
now consider an approximative dynamic programming algorithm based on the
(generally dependent) paths $\mathbf{Z}^{N}=(Z_{j}^{i,N},\,i=1,\ldots,N,$
$j=0,\ldots,\mathcal{J}).$ In the spirit of the Longstaff-Schwarz algorithm we
compute sequentially for $j=\mathcal{J},\ldots,1,$ approximate continuation
functions $C_{N,j}$ and approximate stopping times $\tau_{N,j}^{(i)},$
$i=1,...,N.$ That is, we initialize $\tau_{N,\mathcal{J}}^{(i)}=\mathcal{J},$
$i=1,...,N,$ and $C_{N,\mathcal{J}}=0,$ and once $\tau_{N,j+1}^{(i)},$
$i=1,...,N,$ and $C_{N,j+1},$ $\,j<\mathcal{J},$ are constructed, $C_{N,j}$ is
obtained from solving the minimization,%
\begin{equation}
C_{N,j}:=\underset{h\in\mathcal{H}_{K}}{\arg\min}\left\{  \frac{1}{N}%
\sum_{i=1}^{N}\left(  g_{\tau_{N,j+1}^{(i)}}(Z_{\tau_{N,j+1}^{(i)}}%
^{i,N})-h(Z_{j}^{i,N})\right)  ^{2}\right\}  \label{Cd}%
\end{equation}
($C_{N,\mathcal{J}}:=0$). Next $\tau_{N,j}^{(i)}$ is updated analogue to the
scheme (\ref{tau}), i.e.,%
\[
\tau_{N,j}^{(i)}=j\,1_{\bigl\{g_{j}(Z_{j}^{i,N})\geq C_{N,j}(Z_{j}%
^{i,N})\bigr\}}+\tau_{N,j+1}1_{\bigl\{g_{j}(Z_{j}^{i,N})<C_{N,j}(Z_{j}%
^{i,N})\bigr\}}.
\]
Note that, for fixed $j,$ the pairs%
\begin{equation}
\left(  Z_{j}^{i,N},g_{\tau_{N,j+1}^{(i)}}(Z_{\tau_{N,j+1}^{(i)}}%
^{i,N})\right)  ,\text{ \ \ }i=1,...,N, \label{pas}%
\end{equation}
are generally dependent, but, have the same distribution for each $i.$ As such
(\ref{Cd}) is indeed can be viewed as an estimator of $C_{j}^{\ast}$ (cf.
(\ref{dyn})). Usually the space $\mathcal{H}_{K}$ is taken to be the linear
span of some given set of basis functions, so that the minimization (\ref{Cd})
boils down to a linear least squares problem that can be solved via
straightforward linear algebra. Let us refer to the above algorithm as the
PRMC algorithm.

\section{Regression  on interacting particle systems}

\label{regpart}

In this section we consider for some fixed $t\geq0$ and time $T>t$ a generic
problem of computing the functionals of the form
\begin{equation}
w(x)=\mathsf{E}\left[  f\left(  X_{T}\right)  \mid X_{t}=x\right]  ,
\label{ef}%
\end{equation}
where $(X_{t})$ is the solution of \eqref{eq:sde}. In (\ref{ef}), $T$ may in
general be any random time. Let $\mathbf{X}_{t}^{N}=(X_{t}^{1,N},\ldots
,X_{t}^{N,N})$ be a particle system (\ref{eq:par}). Furthermore, let for each
$K\in\mathbb{N},$ $\mathcal{H}_{K}$ be a $K$-dimensional linear space of
functions $h:\mathbb{R}^{d}\rightarrow\mathbb{R}$ and consider the estimate
\begin{equation}
\widetilde{w}_{N}:=\underset{h\in\mathcal{H}_{K}}{\arg\min}\left\{  \frac
{1}{N}\sum_{i=1}^{N}\left(  f(X_{T}^{i,N})-h(X_{t}^{i,N})\right)
^{2}\right\}  ,
\end{equation}
where the dimension $K$ may depend on $N.$ Let $\left(  \psi_{k}\right)
_{k=1,2,...}$ be a sequence of linearly independent basis functions and set
$\mathcal{H}_{K}:=\mathsf{span}\left\{  \psi_{1},\ldots,\psi_{K}\right\}  .$
In this section we are going to analyze the properties of the estimate
$\widetilde{w}_{N}.$ Note that the random variables $X_{T}^{1,N},\ldots
,X_{T}^{N,N}$ are generally dependent, so that the known results from
regression analysis (see, e.g. \cite{gyorfi2002distribution}) can not be
applied directly. Consider the truncated version of the estimate
$\widetilde{w}_{N}$ defined as $T_{M}\widetilde{w}_{N},$ where $T_{M}$ is a
truncation operator defined for a generic function $h$ and a threshold $M$ as
\begin{equation}
T_{M}h=%
\begin{cases}
M, & h>M,\\
h, & -M\leq f\leq M,\\
-M, & h<-M.
\end{cases}
\label{trunc}%
\end{equation}
We have the following theorem.

\begin{thm}
\label{mainth} Assume that all functions $\psi_{k},$ $k=1,2,\ldots$ and $f$
are globally bounded and Lipschitz continuous. That is, there exist constants
$M_{f},$ $L_{f},$ $L_{k},$ $M_{k},\ell_{k},$ $k=1,2,\ldots.$ such that for all
$x,y\in\mathbb{R}^{d},$
\begin{align}
|f(x)|  &  \leq M_{f},\quad\frac{1}{K}\sum_{k=1}^{K}|\psi_{k}(x)|^{2}\leq M_{K}%
^{2}, \label{eq:bound_psi}\\
\quad\left\vert \psi_{k}(x)-\psi_{k}(y)\right\vert  &  \leq L_{k}\left\vert
x-y\right\vert ,\quad\left\vert f(x)-f(y)\right\vert \leq L_{f}\left\vert
x-y\right\vert. \label{eq:psi_lip}
\end{align}
Further suppose that
\begin{equation}
0<\varkappa_{\circ}\leq\lambda_{\min}\left(  \Sigma_{K}\right)  <\lambda
_{\max}\left(  \Sigma_{K}\right)  \leq\varkappa^{\circ}<\infty
\label{eq: lambda_ass}%
\end{equation}
for all $K\in\mathbb{N},$ where
\[
\Sigma_{K}=\left(  \int\psi_{k}\left(  x\right)  \psi_{l}\left(  x\right)
\mu_{t}(dx),\,k,l=1,\ldots,K\right)  .
\]
Then, it holds
\begin{multline}
\Vert T_{M_{f}}\widetilde{w}^{N}(x)-w\left(  x\right)  \Vert_{L_{2}(\mu_{t}%
)}\\
\lesssim\frac{M_{K}\sqrt{K}}{\sqrt{N}}\Bigl[d_{1}M_{f}\ell_{K}+d_{2}%
L_{f}\Bigr]+\frac{M_{f}}{\sqrt{N}}\Bigl[d_{3}\ell_{K}+\sqrt{1+\log N}\sqrt
{K}\Bigr]\\
+M_{f}\sqrt{K}\exp\left[  -d_{4}\frac{N}{KM_{K}^{2}}\right]  +\inf
_{h\in\mathcal{H}_{K}}\Vert h-w\Vert_{L_{2}(\mu_{t})}\label{res}%
\end{multline}
with 
\begin{eqnarray}
\ell_{K}^{2}    :=\sum_{k=1}^{K}L_{k}^{2}. \label{lk}
\end{eqnarray}
In \eqref{res} the constants $d_{1,2,3,4}$ depend on $\varkappa_{\circ},$
$\varkappa^{\circ}$ only, $\lesssim$ denotes $\leq$ up to a universal constant
for each term.
\end{thm}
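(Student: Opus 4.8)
The plan is to reduce the analysis of the particle estimator $\widetilde w_N$ to that of an ordinary least-squares estimator built on an \emph{i.i.d.}\ sample of the McKean--Vlasov diffusion, and then to transfer the bound by a perturbation argument based on the strong approximation rate \eqref{Koh}. Couple the particle system to $N$ independent copies $X^1,\dots,X^N$ of the solution of \eqref{eq:sde}, where $X^i$ is driven by the same Brownian motion $W^i$ as the $i$-th particle, and set $\widehat w_N:=\underset{h\in\mathcal H_K}{\arg\min}\,\frac1N\sum_{i=1}^N(f(X_T^i)-h(X_t^i))^2$. Since $X_t^1,\dots,X_t^N$ are i.i.d.\ with law $\mu_t$ and the responses $f(X_T^i)$ are i.i.d.\ and bounded by $M_f$, $\widehat w_N$ is a genuine projection-type regression estimator of $w$, and
\[
\|T_{M_f}\widetilde w_N-w\|_{L_2(\mu_t)}\le\|T_{M_f}\widetilde w_N-T_{M_f}\widehat w_N\|_{L_2(\mu_t)}+\|T_{M_f}\widehat w_N-w\|_{L_2(\mu_t)},
\]
so it suffices to bound the first (``perturbation'') term and the second (``statistical'') term separately.

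For the statistical term I would run the classical argument for projection estimators with random design (see \cite{gyorfi2002distribution}), keeping track of the conditioning constants $\varkappa_\circ,\varkappa^\circ$. Writing $\psi(x):=(\psi_1(x),\dots,\psi_K(x))^\top$ and $\widehat\Sigma_N:=\frac1N\sum_{i=1}^N\psi(X_t^i)\psi(X_t^i)^\top$, on the event $\mathcal A_N:=\{\|\widehat\Sigma_N-\Sigma_K\|\le\varkappa_\circ/2\}$ the empirical Gram matrix is invertible with $\lambda_{\min}(\widehat\Sigma_N)\ge\varkappa_\circ/2$; a bias--variance decomposition together with a maximal/chaining inequality for the empirical process indexed by the unit ball of $\mathcal H_K$, and the truncation at level $M_f$, produces the variance term $\tfrac{M_f}{\sqrt N}\sqrt{(1+\log N)K}$ and the approximation error $\inf_{h\in\mathcal H_K}\|h-w\|_{L_2(\mu_t)}$. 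On $\mathcal A_N^c$ one has deterministically $\|T_{M_f}\widehat w_N-w\|_{L_2(\mu_t)}\le 2M_f$, while a matrix Bernstein inequality --- using $|\psi(x)|^2=\sum_k\psi_k(x)^2\le KM_K^2$ from \eqref{eq:bound_psi} and $\operatorname{tr}\Sigma_K\le K\varkappa^\circ$ --- gives $\mathrm{P}(\mathcal A_N^c)\lesssim\sqrt K\exp[-d_4N/(KM_K^2)]$, which is the exponentially small term in \eqref{res}.

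For the perturbation term I would work with coefficient vectors. Let $\widetilde\Psi,\widehat\Psi\in\mathbb R^{N\times K}$ have rows $\psi(X_t^{i,N})^\top$ and $\psi(X_t^i)^\top$, let $\widetilde Y,\widehat Y\in\mathbb R^{N}$ collect $f(X_T^{i,N})$ and $f(X_T^i)$, and set $\widetilde G:=\widetilde\Psi^\top\widetilde\Psi$, $\widehat G:=\widehat\Psi^\top\widehat\Psi$, so that (on the invertibility event) $\widetilde w_N,\widehat w_N$ have coefficient vectors $\widetilde\theta=\widetilde G^{-1}\widetilde\Psi^\top\widetilde Y$ and $\widehat\theta=\widehat G^{-1}\widehat\Psi^\top\widehat Y$. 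For $\eta_i:=\sup_{0\le r\le T}|X_r^{i,N}-X_r^i|$ one has $\|\eta_i\|_p\lesssim N^{-1/2}$ uniformly in $i$ by \eqref{Koh} and exchangeability, hence $\mathsf{E}\sum_i\eta_i^2\lesssim 1$; the Lipschitz bounds \eqref{eq:psi_lip} then yield $\|\widetilde\Psi-\widehat\Psi\|_{\mathrm{F}}\le\ell_K(\sum_i\eta_i^2)^{1/2}$ and $\|\widetilde Y-\widehat Y\|_2\le L_f(\sum_i\eta_i^2)^{1/2}$, while \eqref{eq:bound_psi} gives the deterministic bound $\|\widetilde\Psi\|\vee\|\widehat\Psi\|\le M_K\sqrt{KN}$, and on $\mathcal A_N$ (with the perturbation small) $\lambda_{\min}(\widetilde G)\wedge\lambda_{\min}(\widehat G)\gtrsim N$ and $\|\widehat\theta\|_2\lesssim M_f$. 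Feeding these estimates into the identities
\[
\widetilde\theta-\widehat\theta=\widetilde G^{-1}\bigl((\widetilde\Psi-\widehat\Psi)^\top\widetilde Y+\widehat\Psi^\top(\widetilde Y-\widehat Y)\bigr)-\widetilde G^{-1}(\widetilde G-\widehat G)\widehat\theta,\qquad \widetilde G-\widehat G=(\widetilde\Psi-\widehat\Psi)^\top\widetilde\Psi+\widehat\Psi^\top(\widetilde\Psi-\widehat\Psi),
\]
and passing from $\ell_2$-norms of coefficients to $L_2(\mu_t)$-norms of functions via $\|\sum_k c_k\psi_k\|_{L_2(\mu_t)}^2=c^\top\Sigma_K c\le\varkappa^\circ\|c\|_2^2$, reproduces the terms $\tfrac{M_K\sqrt K}{\sqrt N}(d_1M_f\ell_K+d_2L_f)$ and $\tfrac{M_f}{\sqrt N}d_3\ell_K$ of \eqref{res}; on the residual event (where $\mathcal A_N$ fails or the perturbation destroys the conditioning of $\widetilde G$) one bounds the truncated difference crudely by $2M_f$ and absorbs it into the exponential term. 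Combining the two parts gives \eqref{res}.

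The hard part is the perturbation step. Unlike in the i.i.d.\ case the design matrix is itself perturbed, so one must control the conditioning of the \emph{random} matrix $\widetilde G$ simultaneously with the size of the perturbation, and uniformly over the dimension $K$ (which is allowed to grow with $N$); this is exactly what the general perturbation framework of Section~\ref{pertan} is designed to do. Because that framework bounds the design-matrix norms by the deterministic estimate $M_K\sqrt{KN}$ coming from \eqref{eq:bound_psi}, rather than by the sharper $\mathcal A_N$-conditioned bound of order $\sqrt N$, the prefactor $M_K\sqrt K$ appears in front of the $\ell_K$- and $L_f$-dependent error terms, and the regime in which the perturbation is too large to preserve invertibility has to be treated separately, which is what forces the truncation $T_{M_f}$ into the statement. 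A secondary point, and the reason the coupling above is indispensable, is that the dependence among the particles prevents the maximal inequality used in the statistical step from being applied to $\widetilde w_N$ directly --- it is available only for the i.i.d.\ surrogate $\widehat w_N$.
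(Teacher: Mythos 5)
Your sketch follows the paper's proof essentially step for step: couple the particles to i.i.d.\ copies of \eqref{eq:sde} through \eqref{Koh}, view the particle least-squares problem as a perturbation of the i.i.d.\ one (the paper does this via Theorem~\ref{thm:perturbation} and Corollary~\ref{cor9} for pseudoinverses, you via the equivalent normal-equation identity), bound the design and response perturbations by $\ell_K$ and $L_f$ times the coupling error, control the Gram-matrix conditioning on a high-probability event while paying $2M_f$ times the exceptional probability, and treat the i.i.d.\ statistical error by the standard truncated least-squares bound (Theorem~11.3 of \cite{gyorfi2002distribution}), which is exactly the paper's route. One small bookkeeping correction: the event on which the perturbation destroys the conditioning of $\widetilde G$ has only polynomially small probability (Markov's inequality applied to $\Vert\widetilde\Psi-\widehat\Psi\Vert$, giving a contribution of order $M_f\ell_K/\sqrt N$ after taking square roots), so it cannot be absorbed into the exponential term; in the paper this is precisely what produces the $d_3\,M_f\ell_K/\sqrt{N}$ term, which your stated bound contains anyway, so the final estimate is unaffected.
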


Theorem~\ref{mainth} will be proved in Section~\ref{proofs}.

\section{Convergence analysis of the PRMC algorithm}

\label{conver}

In this section we investigate the convergence properties of the PRMC
regression algorithm. To this end, we modify the PRMC algorithm in such a way
that our fundamental result, Theorem~\ref{mainth}, may be applied. In fact, we
follow an approach in the spirit of \cite{Z} (cf. \cite{J_BelKolSch}), and
assume that instead of one particle sample $\mathbf{Z}^{N},$ we have at hand
for $j=1,\ldots,\mathcal{J}-1,$ independent particle samples $\mathbf{Z}%
^{j;N}:=(Z_{r}^{j;i,N},\,i=1,\ldots,N,$ $r=0,\ldots,\mathcal{J}),$ all
starting at $Z_{0}=X_{0}.$ We next assume that $C_{N,j}$ and $\tau_{N,j}$ are
constructed in the following backward recursive way: As initialization we set
$\tau_{N,\mathcal{J}}=\mathcal{J}$ and $C_{N,\mathcal{J}}\equiv0.$ Once
$C_{N,j+1}$ and $\tau_{N,j+1},$ $j+1\leq\mathcal{J},$ are determined based on
the samples $\mathbf{Z}^{j+1;N},\ldots,\mathbf{Z}^{\mathcal{J}-1;N},$ then
$C_{N,j}$ is constructed based on the samples $\mathbf{Z}^{j;N},\ldots
,\mathbf{Z}^{\mathcal{J}-1;N},$ via%
\[
C_{N,j}=\underset{h\in\mathcal{H}_{K}}{\arg\min}\sum_{i=1}^{N}\left(
g_{\tau^{(i)}_{N,j+1}}\bigl(  Z_{\tau^{(i)}_{N,j+1}}^{j;i,N}\bigr)
-h\bigl(  Z_{j}^{j;i,N}\bigr)  \right)  ^{2},
\]
and, subsequently, $\tau_{N,j}$ is defined by
\[
\tau_{N,j}:=j\,1_{\bigl\{g_{j}(Z_{j})\geq C_{N,j}(Z_{j})\bigr\}}+\tau
_{N,j+1}1_{\bigl\{g_{j}(Z_{j})<C_{N,j}(Z_{j})\bigr\}},
\]
for a generic dummy trajectory $\left(  Z_{l}\right)  _{l=0,\ldots
,\mathcal{J}}$ corresponding to the (exact) solution of (\ref{eq:sde})
independent of
\[
\mathcal{G}_{j}:=\sigma\left\{  \mathbf{Z}^{j;N},\ldots,\mathbf{Z}%
^{\mathcal{J}-1;N}\right\}  .
\]
Let us further define
\begin{equation}
\overline{C}_{N,j}(z):=\mathsf{E}_{\mathcal{G}_{j+1}}\left[  \left.
g_{\tau_{N,j+1}}\left(  Z_{\tau_{N,j+1}}\right)  \right\vert Z_{j}=z\right]  .
\label{cbar}%
\end{equation}
Note that the random function $\overline{C}_{N,j}$ is $\mathcal{G}_{j+1}%
$-measurable while its estimate $C_{N,j}$ is $\mathcal{G}_{j}$-measurable. By
running this procedure all the way down to $j=1,$ we so end up with a sequence
of approximative continuation functions $C_{N,j}\left(  \cdot\right)  ,$ and
the corresponding conditional expectations $\overline{C}_{N,j}(z).$ The
following lemma holds.

\begin{lem}
\label{lem23} For the conditional expectations (\ref{cbar}) we have that,%
\begin{equation}
\left\Vert \overline{C}_{N,j} -C_{j}^{\ast} \right\Vert _{L_{p}(\mu_{j})}%
\leq\sum_{l=j+1}^{\mathcal{J}-1}\left\Vert C_{N,l} -C_{l}^{\ast} \right\Vert
_{L_{p}(\mu_{l})} \label{eq:bound_1}%
\end{equation}
with $p\geq1$ by slightly abusing notation and using $\mu_{j}=\mu_{t_{j}}.$
Note that the inequality (\ref{eq:bound_1}) involves $\mathcal{G}_{j+1}%
$-measurable objects.
\end{lem}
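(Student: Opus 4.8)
The plan is to prove the bound by a telescoping/recursive argument that compares the optimal continuation functions $C_{j}^{\ast}$ with their perturbed analogues $\overline{C}_{N,j}$, exploiting the fact that both satisfy essentially the same dynamic programming recursion but with different stopping families. Recall from \eqref{dyn} that $C_{j}^{\ast}(z)=\mathsf{E}[g_{\tau_{j+1}^{\ast}}(Z_{\tau_{j+1}^{\ast}})\mid Z_{j}=z]$, while by definition \eqref{cbar} $\overline{C}_{N,j}(z)=\mathsf{E}_{\mathcal{G}_{j+1}}[g_{\tau_{N,j+1}}(Z_{\tau_{N,j+1}})\mid Z_{j}=z]$, where $\tau_{N,j+1}$ is built from the estimated continuation functions $C_{N,l}$, $l\geq j+1$. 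The first step is therefore to write the difference $\overline{C}_{N,j}(z)-C_{j}^{\ast}(z)$ as a conditional expectation of $g_{\tau_{N,j+1}}(Z_{\tau_{N,j+1}})-g_{\tau_{j+1}^{\ast}}(Z_{\tau_{j+1}^{\ast}})$ and to control the latter on the event where the two stopping rules first disagree.

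The key estimate is the pointwise/$L_p$ bound on $\mathsf{E}_{\mathcal{G}_{j+1}}|g_{\tau_{N,j+1}}(Z_{\tau_{N,j+1}})-g_{\tau_{j+1}^{\ast}}(Z_{\tau_{j+1}^{\ast}})|$. I would argue as follows: on a given trajectory, let $l^{\ast}$ be the first index $\geq j+1$ at which $\tau_{N,\cdot}$ and $\tau_{\cdot}^{\ast}$ prescribe different actions. At that index one of $g_{l^{\ast}}(Z_{l^{\ast}})$ lies between $C_{l^{\ast}}^{\ast}(Z_{l^{\ast}})$ and $C_{N,l^{\ast}}(Z_{l^{\ast}})$, and using the standard Snell-envelope comparison (the "$1_{g\geq C}$ versus $1_{g\geq \widehat C}$" trick, as in \cite{Z}, \cite{J_BelKolSch}) one shows that the contribution to the error is dominated by $\sum_{l=j+1}^{\mathcal{J}-1}\mathsf{E}_{\mathcal{G}_{j+1}}\bigl[|C_{N,l}(Z_{l})-C_{l}^{\ast}(Z_{l})|\;\big|\;Z_{j}=z\bigr]$. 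Since the dummy trajectory $(Z_l)$ is independent of $\mathcal{G}_{j+1}$ and each $Z_l\sim\mu_l$, taking $L_p(\mu_j)$ norms in $z$ and applying the tower property plus Jensen turns $\mathsf{E}_{\mathcal{G}_{j+1}}[\,\cdot\,|Z_j=z]$ into an $L_p(\mu_l)$ norm of $C_{N,l}-C_l^{\ast}$, yielding exactly \eqref{eq:bound_1}. One should be slightly careful that $C_{N,l}$ is $\mathcal{G}_l$-measurable and hence random, but since the conditioning $\mathcal{G}_{j+1}\supseteq\mathcal{G}_l$ and the dummy trajectory is independent of all of these, the $L_p(\mu_l)$-norm computation is legitimate conditionally.

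I expect the main obstacle to be the bookkeeping of measurability and the precise justification that the one-step disagreement contributes only $|C_{N,l}-C_l^{\ast}|$ at the single index $l$ and not additional propagated terms. Concretely, one must verify the inequality $\bigl|\,\mathsf{E}[g_{\tau_{N,j+1}}(Z_{\tau_{N,j+1}})-g_{\tau_{j+1}^{\ast}}(Z_{\tau_{j+1}^{\ast}})\mid\mathcal{F}_{l^{\ast}}]\,\bigr|\leq |C_{N,l^{\ast}}(Z_{l^{\ast}})-C_{l^{\ast}}^{\ast}(Z_{l^{\ast}})| + \text{(error from indices }>l^{\ast})$, where the first term comes from the fact that on the disagreement event $g_{l^{\ast}}(Z_{l^{\ast}})$ is sandwiched between the two continuation values, and the tail term is handled inductively. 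Summing the single-index bounds over $l^{\ast}\in\{j+1,\dots,\mathcal{J}-1\}$ (the events being disjoint in $l^{\ast}$) gives the sum on the right-hand side of \eqref{eq:bound_1} without any compounding constants, which is the clean feature of the Longstaff–Schwartz recursion that makes the bound additive rather than exponential. The remaining steps — taking expectations, the tower property, and passing to $\mu_l$ — are then routine.
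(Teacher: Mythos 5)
Your proposal is correct and follows essentially the same route as the paper: the paper's proof also splits according to where $\tau^{\ast}$ and $\tau_{N,\cdot}$ disagree, uses the sandwich $C_{l}^{\ast}(Z_{l})\leq g_{l}(Z_{l})\leq C_{N,l}(Z_{l})$ (or its reverse) on the disagreement events together with the optimality of $\tau^{\ast}$, and unrolls the resulting one-step recursion for $\mathcal{R}_{N,j}:=\mathsf{E}_{\mathcal{G}_{j+1}}[g_{\tau_{j+1}^{\ast}}(Z_{\tau_{j+1}^{\ast}})-g_{\tau_{N,j+1}}(Z_{\tau_{N,j+1}})\mid Z_{j}]$ to get the additive sum, finishing with the tower property and Jensen to pass to $L_{p}(\mu_{l})$ norms. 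Your "first disagreement index" bookkeeping is just a repackaging of that same recursion, so no genuinely different ideas are involved.
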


\begin{rem}
It is interesting to compare the estimate \eqref{eq:bound_1} with similar ones
in Lemma 2.3 of \cite{Z}.
\end{rem}

The following theorem states the convergence of the approximate continuation
functions in the PRMC algorithm to the exact ones, respectively.

\begin{thm}
\label{thm_main} Assume that the conditions  
\eqref{eq:bound_psi}, \eqref{eq:psi_lip}, \eqref{eq: lambda_ass} are fulfilled with $f$ replaced
by $g_{j}$ uniformly in $j=1,\ldots,\mathcal{J}$. By denoting the norm%
\[
\left\Vert \cdot\right\Vert _{L_{2}(\mu_{j},\mathbb{P})}^{2}:=\mathsf{E}%
\left[  \left\Vert \cdot\right\Vert _{L_{2}(\mu_{j})}^{2}\right]  ,
\]
due to the unconditional expectation with respect to the \textquotedblleft all
in\textquotedblright\ probability measure $\mathbb{P},$ one has for
$j=1,\ldots,\mathcal{J}-1,$%
\begin{equation}
\left\Vert C_{N,\mathcal{J}-j}-C_{\mathcal{J}-j}^{\ast}\right\Vert _{L_{2}%
(\mu_{\mathcal{J}-j},\mathbb{P})}\leq\Delta_{N,K}(2+c)^{j},\label{eq: main}%
\end{equation}
where $\Delta_{N,K}=c\bigl(\epsilon_{N,K}+\max_{j}\inf_{h\in\mathcal{H}_{K}%
}\left\Vert C_{j}^{\ast}-h\right\Vert _{L_{2}(\mu_{j})}\bigr)$ for some $c>0$
with
\[
\epsilon_{N,K}=\sqrt{\frac{K}{N}}\left(  M_{K}\ell_{K}+\eta_{1}\sqrt{1+\log
N}\right)  +\eta_{2}\sqrt{K}\exp\left[  -\eta_{3}\frac{N}{KM_{K}^{2}}\right]
\]
for some constants $\eta_{1,2,3}>0$ not depending on $K$ and $N.$
\end{thm}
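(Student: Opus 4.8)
\textbf{Proof plan for Theorem~\ref{thm_main}.}
The plan is to run a backward induction on $j$, using Lemma~\ref{lem23} to control the ``conditional bias'' $\overline{C}_{N,\mathcal{J}-j}-C_{\mathcal{J}-j}^{\ast}$ and Theorem~\ref{mainth} to control the ``statistical error'' $C_{N,\mathcal{J}-j}-\overline{C}_{N,\mathcal{J}-j}$. First I would split, using the triangle inequality under the $L_2(\mu_{\mathcal{J}-j},\mathbb{P})$ norm,
\[
\bigl\|C_{N,\mathcal{J}-j}-C_{\mathcal{J}-j}^{\ast}\bigr\|_{L_2(\mu_{\mathcal{J}-j},\mathbb{P})}\le \bigl\|C_{N,\mathcal{J}-j}-\overline{C}_{N,\mathcal{J}-j}\bigr\|_{L_2(\mu_{\mathcal{J}-j},\mathbb{P})}+\bigl\|\overline{C}_{N,\mathcal{J}-j}-C_{\mathcal{J}-j}^{\ast}\bigr\|_{L_2(\mu_{\mathcal{J}-j},\mathbb{P})}.
\]
For the second term, Lemma~\ref{lem23} (with $p=2$) gives, after taking the outer expectation and using Minkowski, a bound by $\sum_{l=\mathcal{J}-j+1}^{\mathcal{J}-1}\|C_{N,l}-C_l^{\ast}\|_{L_2(\mu_l,\mathbb{P})}$, i.e. by the sum of the induction quantities at later times.

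For the first term I would condition on $\mathcal{G}_{\mathcal{J}-j+1}$: given this $\sigma$-algebra, $\overline{C}_{N,\mathcal{J}-j}$ is a fixed (bounded, Lipschitz-type, since $g_j$ is) regression target of the form \eqref{ef} with $f$ replaced by the realized cash-flow $g_{\tau_{N,\mathcal{J}-j+1}}(Z_{\tau_{N,\mathcal{J}-j+1}})$, and $C_{N,\mathcal{J}-j}$ is exactly the particle-regression estimator $\widetilde{w}_N$ studied in Theorem~\ref{mainth}. Hence conditionally on $\mathcal{G}_{\mathcal{J}-j+1}$ the bound \eqref{res} applies, with $M_f$ replaced by $\max_j\|g_j\|_\infty$ and $L_f$ by the Lipschitz constant of the realized stopped payoff (which is controlled uniformly in the stopping rule since the $g_j$ are uniformly Lipschitz and bounded). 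The dominant contributions $\tfrac{M_K\sqrt K}{\sqrt N}[\cdots]$, $\tfrac{M_f}{\sqrt N}[\cdots]$ and $M_f\sqrt K\exp[-d_4 N/(KM_K^2)]$ are deterministic and combine into $\epsilon_{N,K}$; the approximation term $\inf_{h\in\mathcal H_K}\|h-\overline{C}_{N,\mathcal{J}-j}\|_{L_2(\mu_{\mathcal{J}-j})}$ must then be re-expressed in terms of $\inf_{h}\|h-C_{\mathcal{J}-j}^{\ast}\|_{L_2}$ plus $\|\overline{C}_{N,\mathcal{J}-j}-C_{\mathcal{J}-j}^{\ast}\|_{L_2}$ (again by triangle inequality and contractivity of the projection), the latter being absorbed via Lemma~\ref{lem23}. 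Taking the outer expectation and a square root gives a bound of the form $\epsilon_{N,K}+\inf_h\|C_{\mathcal{J}-j}^{\ast}-h\|_{L_2(\mu_{\mathcal{J}-j})}+\sum_{l>\mathcal{J}-j}\|C_{N,l}-C_l^{\ast}\|$, all times a universal constant.

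Combining the two parts, writing $a_j:=\|C_{N,\mathcal{J}-j}-C_{\mathcal{J}-j}^{\ast}\|_{L_2(\mu_{\mathcal{J}-j},\mathbb{P})}$ and $\Delta_{N,K}=c(\epsilon_{N,K}+\max_j\inf_h\|C_j^{\ast}-h\|_{L_2(\mu_j)})$, I obtain a recursion of the shape
\[
a_j\le \Delta_{N,K}+(1+c)\sum_{l<j}a_l,
\]
with $a_0=0$ (since $C_{N,\mathcal{J}}=C_{\mathcal{J}}^{\ast}\equiv0$). A standard discrete Gr\"onwall / induction argument then yields $a_j\le\Delta_{N,K}(2+c)^{j}$: indeed if the bound holds for all $l<j$ then $a_j\le\Delta_{N,K}\bigl(1+(1+c)\sum_{l=0}^{j-1}(2+c)^l\bigr)=\Delta_{N,K}\bigl(1+\tfrac{1+c}{1+c}((2+c)^j-1)\bigr)=\Delta_{N,K}(2+c)^j$, closing the induction.

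The main obstacle I expect is the careful handling of the conditioning: one must check that, conditionally on $\mathcal{G}_{\mathcal{J}-j+1}$, the realized payoff $g_{\tau_{N,\mathcal{J}-j+1}}(Z_{\tau_{N,\mathcal{J}-j+1}})$ genuinely satisfies the boundedness and Lipschitz hypotheses of Theorem~\ref{mainth} with constants uniform in the (random, $\mathcal{G}_{\mathcal{J}-j+1}$-measurable) stopping rule — boundedness is immediate from $\max_j\|g_j\|_\infty$, but the ``Lipschitz'' requirement is subtle because $z\mapsto g_{\tau_{N,\mathcal{J}-j+1}}(Z_{\tau_{N,\mathcal{J}-j+1}})$ as a function of the path is not literally what Theorem~\ref{mainth} regresses; one rather applies Theorem~\ref{mainth} to the map $X_T\mapsto f(X_T)$ along the particle system with $T=\tau_{N,\mathcal{J}-j+1}$ a (random) stopping time — which the statement of Theorem~\ref{mainth} explicitly permits (``$T$ may in general be any random time''), so the role of $f$ is played by the uniformly bounded and Lipschitz $g_{\mathcal{J}-j+1}$ and the later $g_l$'s entering through the stopped index, and the uniform-in-$j$ hypothesis on the $g_j$ is exactly what makes the constants $M_K,\ell_K,\eta_i$ independent of $j$. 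The secondary technical point is interchanging $\mathsf{E}[\,\cdot\,]$ with the $L_2(\mu)$ norm and keeping the $\sqrt{\mathsf{E}[(\cdot)^2]}\le$ (sum of $\sqrt{\mathsf{E}[(\cdot)^2]}$) bookkeeping consistent, which is routine via Minkowski's inequality in $L_2(\mathbb{P})$.
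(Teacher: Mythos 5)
Your proposal follows essentially the same route as the paper: split $C_{N,\mathcal{J}-j}-C^{\ast}_{\mathcal{J}-j}$ by the triangle inequality into the regression error $C_{N,\mathcal{J}-j}-\overline{C}_{N,\mathcal{J}-j}$ (controlled by applying Theorem~\ref{mainth} conditionally on $\mathcal{G}_{\mathcal{J}-j+1}$ with the stopped payoff as the target, re-expressing the approximation term via $\inf_h\Vert C^{\ast}_{\mathcal{J}-j}-h\Vert$ plus the bias) and the bias $\overline{C}_{N,\mathcal{J}-j}-C^{\ast}_{\mathcal{J}-j}$ (controlled by Lemma~\ref{lem23} with $p=2$ after taking the unconditional expectation), and then close the geometric recursion $a_j\leq\Delta_{N,K}+(1+c)\sum_{l<j}a_l$ by induction exactly as in the paper. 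The argument and the final arithmetic giving $(2+c)^{j}$ are correct and match the paper's proof.
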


\begin{example}
The Hermite polynomial of order $j$ is given, for $j\geq0$, by:
\[
H_{j}(x)=(-1)^{j}e^{x^{2}}\frac{d^{j}}{dx^{j}}(e^{-x^{2}}).
\]
Hermite polynomials are orthogonal with respect to the weight function
$e^{-x^{2}}$ and satisfy: $\int_{{\mathbb{R}}}H_{j}(x)H_{\ell}(x)e^{-x^{2}%
}dx=2^{j}j!\sqrt{\pi}\delta_{j,\ell}.$ The Hermite function of order $j$ is
given by:
\begin{equation}
\psi_{j}(x)=c_{j}H_{j}(x)e^{-x^{2}/2},\quad c_{j}=\left(  2^{j}j!\sqrt{\pi
}\right)  ^{-1/2}.\label{fhermite}%
\end{equation}
The sequence $(\psi_{j},j\geq0)$ is an orthonormal basis of ${L}%
_{2}({\mathbb{R}})$. The infinite norm of $\psi_{j}$ satisfies (see
Szeg\"{o}~\cite{szego1975orthogonal} p.242):
\begin{equation}
\,\Vert\psi_{j}\Vert_{\infty}\leq M_{0},\quad\quad M_{0}\simeq1,086435/\pi
^{1/4}\simeq0.8160.\label{borneh}%
\end{equation}
Furthermore, since $\psi_{k}^{\prime}(x)=\sqrt{k/2}\psi_{k-1}(x)-\sqrt
{(k+1)/2}\psi_{k+1}(x)$ we derive that the condition \eqref{eq:psi_lip} is
fulfilled with $L_{k}=2M_{0}\sqrt{k}.$
\end{example}

\section{Numerical experiment}

\label{numsec} As a simple illustration of the proposed methodology, let us
consider optimal stopping problem in the so-called Shimizu-Yamada model%
\begin{equation}
dX_{t}=\left(  a\mathsf{E}\left[  X_{t}\right]  +bX_{t}\right)  \, dt+\sigma
dW_{t},\quad X_{0}=x_{0}, \quad t\in[0,T] \label{SY}%
\end{equation}
(see \cite{frank2004stochastic}, Section 3.10), which has the explicit
solution%
\begin{equation}
X_{t}=x_{0}e^{\left(  a+b\right)  t}+\sigma\int_{0}^{t}e^{b(t-s)}dW_{s}
\label{Gs}%
\end{equation}
that in turn solves the ordinary SDE%
\begin{equation}
dX_{t}=(x_{0}ae^{\ \left(  a+b\right)  t}+bX_{t})\,dt+\sigma dW_{t}
\label{oSDE}%
\end{equation}
(cf. \cite{belomestny2017projected}). It is straightforward to show that the
conditional mean and variance of (\ref{Gs}) are given by
\begin{align*}
\mathsf{E}\left[  X_{t}|X_{s}\right]   &  =e^{b\left(  t-s\right)  }%
X_{s}+x_{0}e^{bt}\left(  e^{at}-e^{as}\right)  \text{ \ \ and}\\
\mathrm{Var}\left[  X_{t}|X_{s}\right]   &  =\sigma^{2}\frac{e^{2b\left(
t-s\right)  }-1}{2b},\text{ \ \ }0\leq s\leq t,
\end{align*}
respectively. That is, for $s=0$ we have that%
\begin{equation}
\mathsf{E}\left[  X_{t}\right]  =x_{0}e^{\left(  a+b\right)  t}\text{ \ \ and
\ \ }\mathrm{Var}\left[  X_{t}\right]  =\sigma^{2}\frac{e^{2bt}-1}{2b}.
\label{EV}%
\end{equation}
In the particular case $b=-a,$ $a>0$ one so has
\begin{align}
\mathsf{E}\left[  X_{t}|X_{s}\right]   &  =e^{-a\left(  t-s\right)  }%
X_{s}+x_{0}\left(  1-e^{-a\left(  t-s\right)  }\right)  ,\label{CondD}\\
\mathrm{Var}\left[  X_{t}|X_{s}\right]   &  =\sigma^{2}\frac{1-e^{-2a\left(
t-s\right)  }}{2a},\text{ \ \ }0\leq s\leq t,\text{ \ \ }a>0,\nonumber
\end{align}
and (\ref{EV}) yields $\mathsf{E}\left[  X_{t}\right]  =x_{0}$ for all $t.$
For this case the particle system (\ref{eq:par}) reads%
\begin{equation}
X_{t}^{i,N}=x_{0}+\frac{a}{N}\sum_{j=1}^{N}\int_{0}^{t}X_{s}^{j,N}ds-a\int
_{0}^{t}X_{s}^{i,N}ds+\sigma W_{t}^{i},\quad t\in[0,T]. \label{POU}%
\end{equation}
We now consider the optimal stopping problem%
\begin{equation}
V_{0}^{\ast}=\sup_{\tau\in\mathcal{T}_{j}}\mathsf{E}[g_{\tau}(X_{t_{\tau}})],
\label{stopn}%
\end{equation}
for some reward functions $g_{j}:\mathbb{R}\rightarrow\mathbb{R}_{\geq0},$
$t_{j}=jT/\mathcal{J},$ where $(X_{t})$ solves (\ref{SY}) with $b=-a,$ $a>0.$
Since $X$ follows an ordinary SDE (\ref{oSDE}), we may compute an
approximation to (\ref{stopn}) numerically by the Longstaff-Schwarz method
\cite{J_LS2001} based on independent trajectories of (\ref{oSDE}), and then
compare it to the particle based Longstaff-Schwarz algorithm proposed in
Section~\ref{ApprDP}. Let us further consider, for illustration a Bermudan put
option (in financial terms),%
\[
g_{j}(x)=e^{-rt_{j}}(x-K)^{+},\quad j=0,\ldots,\mathcal{J},
\]
for some $K>0,$ where $r$ can be interpreted as interest rate. We take the
following parameters $d=1,$ $x_{0}=1,$$K=0.1,$ $a=1,$ $T=1,$ $\mathcal{J}=100$
and implement the following two phase algorithm. In the first stage we run
$N_{\mathrm{tr}}$ trajectories either of the particle system \eqref{POU} or of
the process \eqref{oSDE}. Using these trajectories, we estimate the
corresponding continuation functions using linear regression with quadratic
polynomials and reward functions as basis functions. In the second stage we
use the estimated continuation values on a new set of $N_{\mathrm{test}}=5000$
testing trajectories to construct a suboptimal stopping rule and consequently
a lower bound for $V_{0}^{*}$ by averaging over the testing paths. We also
compute dual upper bounds using the estimated continuation functions and
$N_{\mathrm{in}}=100$ inner paths to approximate one step conditional
expectations, see, e.g. Chapter 3 in \cite{belomestny2018book}. The results
for different values of $N_{\mathrm{tr}}$ are shown in Table~\ref{TablePRMC}.
\begin{table}[h]
\centering
\renewcommand{\arraystretch}{1.3}
\begin{tabular}
[c]{|c|c|c|}\hline
$N_{\mathrm{tr}}$ & RMC & PRMC\\\hline\hline
10 & [0.9393(0.0079), 1.2742(0.0076)] & [0.9287(0.0058),
1.1750(0.0038)]\\\hline
50 & [1.0047(0.0082), 1.0942(0.0019)] & [0.9829(0.0072),
1.1745(0.0041)]\\\hline
100 & [1.0144(0.0073), 1.0871(0.0013)] & [1.0079(0.0080),
1.0978(0.0023)]\\\hline
300 & [1.0342(0.0077), 1.0718(0.0009)] & [1.0330(0.0070),
1.0700(0.0010)]\\\hline
1000 & [1.0575(0.0075), 1.0699(0.0007)] & [1.0546(0.0078),
1.0689(0.0008)]\\\hline
\end{tabular}
\caption{Lower and dual upper bounds with standard deviations for RMC and PRMC
algorithms.}%
\label{TablePRMC}%
\end{table}As can be seen from Table~\ref{TablePRMC}, the PRMC (Particle
Regression MC) performs a bit worse than RMC (Regression MC), but the
difference becomes smaller as $N_{\mathrm{tr}}$ increases.

\section{Perturbation analysis for linear regression}

\label{pertan}

Consider a least squares problem of the form
\begin{equation}
\beta^{\circ}=\underset{\beta\in\mathbb{R}^{d}}{\arg\min}\sum_{i=1}^{N}%
(Y_{i}-\beta^{\top}U_{i})^{2}, \label{eq:least_squares}%
\end{equation}
where for $i=1,...,N,$ $\left(  Y_{i},U_{i}\right)  $ are i.i.d. pairs of a
random variable $Y_{i}$ and a random (column) vector $U_{i}\in$ $\mathbb{R}%
^{d}.$ With $U:=(U_{1},\ldots,U_{N})\in\mathbb{R}^{d\times N},$ $Z=N^{-1/2}%
U^{\top},$ and $V=N^{-1/2}\left(  Y_{1},\ldots,Y_{N}\right)  ^{\top},$ the
solution of the problem (\ref{eq:least_squares}) can be written in terms of
pseudo inverses (denoted with $\dag$),%
\begin{equation}
\beta^{\circ}=\left(  UU^{\top}\right)  ^{-1}UY=\left(  Z^{\top}Z\right)
^{-1}Z^{\top}V=Z^{\dag}V. \label{bet}%
\end{equation}
Consider now the least squares problem (\ref{eq:least_squares}) due to a
perturbation $\left(  \widetilde{Y}_{i},\widetilde{U}_{i}\right)  $ of the
pairs $\left(  Y_{i},U_{i}\right)  ,$ and define $\widetilde{Z}$ and
$\widetilde{V}$ accordingly. We so consider (cf. (\ref{bet}))%
\begin{equation}
\widetilde{\beta}^{\circ}=\left(  \widetilde{Z}^{\top}\widetilde{Z}\right)
^{-1}\widetilde{Z}^{\top}\widetilde{V}=\widetilde{Z}^{\dag}\widetilde{V}
\label{bett}%
\end{equation}
and set%
\begin{equation}
\widetilde{Z}=Z+E,\quad\widetilde{V}=V+F. \label{pert}%
\end{equation}
While the rows of $Z$ and the components of $V$ are independent, the rows of
the perturbation matrix $E$ and the components of the perturbation vector $F$
are generally dependent. \ Also we note that we don't assume any kind of
independence between the perturbations $E$ and $F$ and the matrix $Z$ and
vector $V,$ respectively.

\begin{thm}
\label{thm:perturbation} Consider the least squares problem
(\ref{eq:least_squares}) with solution (\ref{bet}), and its perturbation due
to (\ref{pert}) with solution (\ref{bett}), respectively. Assume that
$U_{1},\ldots,U_{N}$ in (\ref{eq:least_squares}) are i.i.d. random vectors in
$\mathbb{R}^{d}$ such that for some $M>0,$ $\left\Vert U_{1}\right\Vert \leq
M$ a.s. Set
\[
\mathsf{E}\left[  U_{1}U_{1}^{\top}\right]  =\Sigma,
\]
so that
\[
Z^{\top}Z=\frac{1}{N}UU^{\top}=\frac{1}{N}\sum_{i=1}^{N}U_{i}U_{i}^{\top}.
\]
Let $\lambda_{\min}(\Sigma)$ be the smallest eigenvalue, and $\lambda_{\max
}(\Sigma)$ be the largest eigenvalue of $\Sigma,$ respectively. Then for any
$\rho\in(0,\lambda_{\min}(\Sigma))$ and $\varepsilon\in(0,\lambda_{\min
}\left(  \Sigma\right)  -\rho)$ we have on the set $\mathcal{C}:=\mathcal{C}%
_{1}\cap\mathcal{C}_{2}\cap\mathcal{C}_{3}\cap\mathcal{C}_{4}$ with%
\begin{align*}
\mathcal{C}_{1}  &  :\text{ \ \ }\lambda_{\mathrm{\max}}\left(  Z^{\top
}Z\right)  <\lambda_{\mathrm{\max}}\left(  \Sigma\right)  +\varepsilon,\\
\mathcal{C}_{2}  &  :\text{ \ \ }\lambda_{\min}\left(  Z^{\top}Z\right)
>\lambda_{\mathrm{\min}}\left(  \Sigma\right)  -\varepsilon,\\
\mathcal{C}_{3}  &  :\text{ \ \ }\lambda_{\mathrm{\min}}\left(  \Sigma\right)
-\left(  2\sqrt{\lambda_{\mathrm{\max}}\left(  \Sigma\right)  +\varepsilon
}+1\right)  \Vert E\Vert>\rho+\varepsilon,\\
\mathcal{C}_{4}  &  :\text{ \ \ }\Vert E\Vert<1.
\end{align*}
that%
\[
\Vert\widetilde{\beta}^{\circ}-\beta^{\circ}\Vert\leq c_{1}(\Sigma
,\varepsilon,\rho)\Vert E\Vert\Vert V\Vert+c_{2}(\Sigma,\varepsilon,\rho)\Vert
F\Vert,
\]
where%
\begin{align*}
c_{1}(\Sigma,\varepsilon,\rho)  &  :=\frac{1}{\rho}+\frac{2\left(
\lambda_{\mathrm{\max}}\left(  \Sigma\right)  +\varepsilon\right)
+\sqrt{\lambda_{\mathrm{\max}}\left(  \Sigma\right)  +\varepsilon}}{\rho^{2}%
}\text{ \ \ and }\\
c_{2}(\Sigma,\varepsilon,\rho)  &  :=c_{1}(\Sigma,\varepsilon,\rho
)+\frac{\sqrt{\lambda_{\mathrm{\max}}\left(  \Sigma\right)  +\varepsilon}%
}{\lambda_{\mathrm{\min}}\left(  \Sigma\right)  -\varepsilon}.
\end{align*}
Furthermore, for any $\delta\in(0,1),$ and $N$ such that%
\begin{equation}
\varepsilon=\varepsilon_{\delta,N}=M\sqrt{\,\frac{\log(2d/\delta)}{NC}}%
\frac{\lambda_{\max}^{3/2}(\Sigma)}{\lambda_{\min}(\Sigma)}\leq\lambda
_{\mathrm{\min}}(\Sigma)-\rho\label{eN}%
\end{equation}
(cf. (\ref{epdel})), one has for the probability of $\mathcal{C},$
\[
\mathsf{P}\left[  \mathcal{C}\right]  \geq1-\delta-C_{p}\left(  \left(
\frac{2\sqrt{\lambda_{\mathrm{\max}}\left(  \Sigma\right)  +\varepsilon}%
+1}{\lambda_{\mathrm{\min}}\left(  \Sigma\right)  -\varepsilon-\rho}\right)
^{p}+1\right)  ,
\]
provided $\delta$ and $C_{p}:=\mathsf{E}\left[  \left\vert E\right\vert
^{p}\right]  $ are small enough (such that the above bound is positive).
\end{thm}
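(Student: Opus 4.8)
The plan is to split the statement into a deterministic perturbation estimate that holds pointwise on the event $\mathcal{C}$, and a probabilistic lower bound on $\mathsf{P}[\mathcal{C}]$.

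First I would establish invertibility. On $\mathcal{C}_{2}$, Weyl's inequality gives $\lambda_{\min}(Z^{\top}Z)\geq\lambda_{\min}(\Sigma)-\varepsilon>0$, so $Z$ has full column rank and $\beta^{\circ}=(Z^{\top}Z)^{-1}Z^{\top}V=Z^{\dag}V$. Expanding $\widetilde{Z}^{\top}\widetilde{Z}=Z^{\top}Z+Z^{\top}E+E^{\top}Z+E^{\top}E$ and using $\|Z\|^{2}=\lambda_{\max}(Z^{\top}Z)<\lambda_{\max}(\Sigma)+\varepsilon$ on $\mathcal{C}_{1}$ together with $\|E\|^{2}<\|E\|$ on $\mathcal{C}_{4}$, one gets $\lambda_{\min}(\widetilde{Z}^{\top}\widetilde{Z})\geq\lambda_{\min}(Z^{\top}Z)-2\|Z\|\|E\|-\|E\|^{2}>\lambda_{\min}(\Sigma)-\varepsilon-(2\sqrt{\lambda_{\max}(\Sigma)+\varepsilon}+1)\|E\|$, which by $\mathcal{C}_{3}$ exceeds $\rho>0$. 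Hence $\widetilde{Z}$ also has full column rank, $\widetilde{\beta}^{\circ}=\widetilde{Z}^{\dag}\widetilde{V}$, and $\|(\widetilde{Z}^{\top}\widetilde{Z})^{-1}\|<1/\rho$.

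Next I would use the perturbation identity. Decompose $\widetilde{\beta}^{\circ}-\beta^{\circ}=(\widetilde{Z}^{\dag}-Z^{\dag})V+\widetilde{Z}^{\dag}F$ and derive, from $Z^{\top}(I-ZZ^{\dag})=0$, the formula $\widetilde{Z}^{\dag}-Z^{\dag}=(\widetilde{Z}^{\top}\widetilde{Z})^{-1}\bigl(E^{\top}(I-ZZ^{\dag})-\widetilde{Z}^{\top}EZ^{\dag}\bigr)$. On $\mathcal{C}$ one has $\|I-ZZ^{\dag}\|\leq1$, $\|\widetilde{Z}\|\leq\|Z\|+\|E\|<\sqrt{\lambda_{\max}(\Sigma)+\varepsilon}+1$ and, since $\lambda_{\min}(Z^{\top}Z)>\rho$, $\|Z^{\dag}\|\leq\|(Z^{\top}Z)^{-1}\|\,\|Z\|<\sqrt{\lambda_{\max}(\Sigma)+\varepsilon}/\rho$; collecting these estimates and absorbing the $\|E\|^{2}$ contributions via $\|E\|<1$ yields $\|\widetilde{Z}^{\dag}-Z^{\dag}\|\leq c_{1}(\Sigma,\varepsilon,\rho)\|E\|$. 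For the second term write $\widetilde{Z}^{\dag}F=Z^{\dag}F+(\widetilde{Z}^{\dag}-Z^{\dag})F$; on $\mathcal{C}_{1}\cap\mathcal{C}_{2}$ the bound $\|Z^{\dag}\|\leq\|(Z^{\top}Z)^{-1}\|\,\|Z\|<\sqrt{\lambda_{\max}(\Sigma)+\varepsilon}/(\lambda_{\min}(\Sigma)-\varepsilon)$ together with $\|E\|<1$ gives $\|\widetilde{Z}^{\dag}F\|\leq c_{2}(\Sigma,\varepsilon,\rho)\|F\|$, and the triangle inequality delivers $\|\widetilde{\beta}^{\circ}-\beta^{\circ}\|\leq c_{1}\|E\|\|V\|+c_{2}\|F\|$.

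Finally, for the probability bound: since $\{\|Z^{\top}Z-\Sigma\|\leq\varepsilon\}\subseteq\mathcal{C}_{1}\cap\mathcal{C}_{2}$ (Weyl again), and $Z^{\top}Z=\tfrac{1}{N}\sum_{i=1}^{N}U_{i}U_{i}^{\top}$ is an average of i.i.d. positive semidefinite matrices with $\|U_{i}U_{i}^{\top}\|\leq M^{2}$, an appropriate matrix concentration (Chernoff/Bernstein) inequality shows that the choice $\varepsilon=\varepsilon_{\delta,N}$ in \eqref{eN} (cf. \eqref{epdel}) forces $\mathsf{P}[(\mathcal{C}_{1}\cap\mathcal{C}_{2})^{c}]\leq\delta$. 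The event $\mathcal{C}_{3}\cap\mathcal{C}_{4}$ equals $\{\|E\|<s^{\ast}\}$ with $s^{\ast}=\min\{1,(\lambda_{\min}(\Sigma)-\rho-\varepsilon)/(2\sqrt{\lambda_{\max}(\Sigma)+\varepsilon}+1)\}$, so Markov's inequality applied to $\|E\|^{p}$ gives $\mathsf{P}[(\mathcal{C}_{3}\cap\mathcal{C}_{4})^{c}]\leq C_{p}/(s^{\ast})^{p}\leq C_{p}\bigl(1+\bigl((2\sqrt{\lambda_{\max}(\Sigma)+\varepsilon}+1)/(\lambda_{\min}(\Sigma)-\varepsilon-\rho)\bigr)^{p}\bigr)$; a union bound over the two complements then yields the stated lower bound on $\mathsf{P}[\mathcal{C}]$. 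The hard part is the deterministic estimate: one must use the right pseudo-inverse perturbation identity — legitimate only once full column rank of $\widetilde{Z}$ is secured, which is exactly what $\mathcal{C}_{3}\cap\mathcal{C}_{4}$ provides — and then track carefully which of the four events controls which operator-norm factor, so that the particular $\rho$- and $\lambda_{\max}(\Sigma)$-dependence of $c_{1}$ and $c_{2}$ emerges as claimed; the probabilistic step is comparatively routine given a suitable matrix concentration inequality.
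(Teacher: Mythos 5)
Your argument is correct and follows the same overall architecture as the paper's proof: a deterministic estimate valid pointwise on $\mathcal{C}$, obtained from the splitting $\widetilde{\beta}^{\circ}-\beta^{\circ}=(\widetilde{Z}^{\dagger}-Z^{\dagger})V+\widetilde{Z}^{\dagger}F$, followed by a union bound in which $\mathcal{C}_{1}\cap\mathcal{C}_{2}$ is handled by eigenvalue concentration for $\frac{1}{N}\sum_i U_iU_i^{\top}$ and $\mathcal{C}_{3},\mathcal{C}_{4}$ by Markov applied to $\Vert E\Vert^{p}$. The one genuinely different ingredient is the deterministic core: where the paper isolates Lemma~\ref{App}, writing $(Z+E)^{\dagger}-Z^{\dagger}=\bigl[((Z+E)^{\top}(Z+E))^{-1}-(Z^{\top}Z)^{-1}\bigr]Z^{\top}+((Z+E)^{\top}(Z+E))^{-1}E^{\top}$ and using the resolvent identity for the bracketed difference under condition (\ref{cond}), you use the Wedin-type projector identity $\widetilde{Z}^{\dagger}-Z^{\dagger}=(\widetilde{Z}^{\top}\widetilde{Z})^{-1}\bigl(E^{\top}(I-ZZ^{\dagger})-\widetilde{Z}^{\top}EZ^{\dagger}\bigr)$; both hinge on exactly the same event-wise controls ($\lambda_{\min}(\widetilde{Z}^{\top}\widetilde{Z})>\rho$ via $\mathcal{C}_{1}$--$\mathcal{C}_{4}$, $\Vert Z\Vert$ and $\Vert(Z^{\top}Z)^{-1}\Vert$ via $\mathcal{C}_{1},\mathcal{C}_{2}$), and your route in fact yields a numerator $(\lambda_{\max}(\Sigma)+\varepsilon)+\sqrt{\lambda_{\max}(\Sigma)+\varepsilon}$ in place of $2(\lambda_{\max}(\Sigma)+\varepsilon)+\sqrt{\lambda_{\max}(\Sigma)+\varepsilon}$, i.e.\ a constant slightly sharper than, hence consistent with, $c_{1}$; the bound $\Vert\widetilde{Z}^{\dagger}\Vert\leq c_{2}$ then comes out identically. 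In the probabilistic step you also take a mild shortcut: passing from $\{\Vert Z^{\top}Z-\Sigma\Vert\leq\varepsilon\}$ to $\mathcal{C}_{1}\cap\mathcal{C}_{2}$ directly by Weyl avoids the $\lambda_{\max}(\Sigma)/\lambda_{\min}(\Sigma)$ loss that the paper's Lemma~\ref{lem:spectrum_subgaussian} incurs in the minimal-eigenvalue direction and then compensates for in the definition (\ref{epdel}) of $\varepsilon_{\delta,N}$; with the (larger) $\varepsilon_{\delta,N}$ prescribed in (\ref{eN}) your argument still needs the same matrix concentration input (the paper invokes Theorem~5.44 of \cite{MR2963170}), and one should note, as the paper does, that (\ref{eN}) guarantees the admissibility condition (\ref{epdelc}) for that concentration bound. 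Your combined Markov bound for $\mathcal{C}_{3}\cap\mathcal{C}_{4}$ via $s^{\ast}$ is equivalent to the paper's separate treatment of the two events. Net effect: same theorem, same constants (or marginally better), with a cleaner pseudo-inverse identity replacing Lemma~\ref{App} and a cleaner eigenvalue reduction replacing part of Lemma~\ref{lem:spectrum_subgaussian}.
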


\begin{proof}
Note that $\mathcal{C}:=\mathcal{C}_{1}\cap\mathcal{C}_{2}\cap\mathcal{C}%
_{3}\cap\mathcal{C}_{4}$ implies (\ref{cond}) in Lemma \ref{App} and so by
this Lemma,%
\begin{align*}
\Vert(Z+E)^{\dagger}-Z^{\dagger}\Vert &  \leq\frac{\Vert E\Vert}{\rho}\left[
1+\frac{\left(  2\Vert Z\Vert+1\right)  \Vert Z\Vert}{\rho}\right] \\
&  \leq\frac{\Vert E\Vert}{\rho}\left[  1+\frac{2\left(  \lambda
_{\mathrm{\max}}\left(  \Sigma\right)  +\varepsilon\right)  +\sqrt
{\lambda_{\mathrm{\max}}\left(  \Sigma\right)  +\varepsilon}}{\rho}\right] \\
&  =c_{1}(\Sigma,\varepsilon,\rho)\Vert E\Vert
\end{align*}
Thus, on $\mathcal{C}$ one has also,%
\begin{align*}
\Vert(Z+E)^{\dagger}\Vert &  \leq\Vert(Z+E)^{\dagger}-Z^{\dagger}\Vert+\Vert
Z^{\dagger}\Vert\\
&  \leq c_{1}(\Sigma,\varepsilon,\rho)+\frac{\sqrt{\lambda_{\mathrm{\max}%
}\left(  \Sigma\right)  +\varepsilon}}{\lambda_{\mathrm{\min}}\left(
\Sigma\right)  -\varepsilon}=c_{2}(\Sigma,\varepsilon,\rho),
\end{align*}
using that $\Vert Z^{\dagger}\Vert\leq\Vert(Z^{\top}Z)^{-1}\Vert\Vert Z\Vert.$
So on $\mathcal{C}$ we get,%
\begin{align*}
\Vert\widetilde{\beta}^{\circ}-\beta^{\circ}\Vert &  =\Vert\widetilde
{Z}^{\dagger}\widetilde{V}-Z^{\dagger}V\Vert=\Vert(Z+E)^{\dagger
}(V+F)-Z^{\dagger}V\Vert\\
&  \leq c_{1}(\Sigma,\varepsilon,\rho)\Vert E\Vert\Vert V\Vert+c_{2}%
(\Sigma,\varepsilon,\rho)\Vert F\Vert.
\end{align*}
For the probability of $\mathcal{C}$ one has%
\begin{equation}
\mathsf{P}\left[  \mathcal{C}\right]  \geq1-\mathsf{P}\left[  \Omega
\backslash\mathcal{C}_{1}\cup\Omega\backslash\mathcal{C}_{2}\right]
-\mathsf{P}\left[  \Omega\backslash\mathcal{C}_{3}\right]  -\mathsf{P}\left[
\Omega\backslash\mathcal{C}_{4}\right]  . \label{esp}%
\end{equation}
For the term $\mathsf{P}\left[  \Omega\backslash\mathcal{C}_{1}\cup
\Omega\backslash\mathcal{C}_{2}\right]  $ we are going to apply Lemma
\ref{lem:spectrum_subgaussian}. It is easy to see that, since $0<\lambda
_{\mathrm{\min}}(\Sigma)\leq\lambda_{\mathrm{\max}}(\Sigma),$ (\ref{eN})
implies (\ref{epdelc}) in Lemma~\ref{lem:spectrum_subgaussian}. So, due to
this lemma, we have that
\[
\mathsf{P}\left[  \Omega\backslash\mathcal{C}_{1}\cup\Omega\backslash
\mathcal{C}_{2}\right]  \leq\delta.
\]
Furthermore,%
\begin{align*}
\mathsf{P}\left[  \Omega\backslash\mathcal{C}_{3}\right]   &  =\mathsf{P}%
\left[  \lambda_{\mathrm{\min}}\left(  \Sigma\right)  -\left(  2\sqrt
{\lambda_{\mathrm{\max}}\left(  \Sigma\right)  +\varepsilon}+1\right)  \Vert
E\Vert\leq\rho+\varepsilon\right] \\
&  =\mathsf{P}\left[  \frac{\lambda_{\mathrm{\min}}\left(  \Sigma\right)
-\varepsilon-\rho}{2\sqrt{\lambda_{\mathrm{\max}}\left(  \Sigma\right)
+\varepsilon}+1}\leq\Vert E\Vert\right] \\
&  \leq\left(  \frac{2\sqrt{\lambda_{\mathrm{\max}}\left(  \Sigma\right)
+\varepsilon}+1}{\lambda_{\mathrm{\min}}\left(  \Sigma\right)  -\varepsilon
-\rho}\right)  ^{p}\mathsf{E}\left[  \Vert E\Vert^{p}\right]  ,
\end{align*}
and%
\[
\mathsf{P}\left[  \Omega\backslash\mathcal{C}_{4}\right]  =\mathsf{P}\left[
\Vert E\Vert\geq1\right]  \leq\mathsf{E}\left[  \Vert E\Vert^{p}\right]  .
\]
The statement now follows from (\ref{esp}).
\end{proof}

\begin{cor}
\label{cor9} Let us take $\varepsilon=\rho=\lambda_{\mathrm{\min}}\left(
\Sigma\right)  /4.$ Then with%
\begin{align*}
c_{1}(\Sigma)  &  :=\frac{1}{\lambda_{\min}\left(  \Sigma\right)  /4}\\
&  +\frac{2\left(  \lambda_{\mathrm{\max}}\left(  \Sigma\right)
+\lambda_{\min}\left(  \Sigma\right)  /4\right)  +\sqrt{\lambda_{\mathrm{\max
}}\left(  \Sigma\right)  +\lambda_{\min}\left(  \Sigma\right)  /4}}{\left(
\lambda_{\min}\left(  \Sigma\right)  /4\right)  ^{2}},\\
c_{2}(\Sigma)  &  :=c_{1}(\Sigma)+\frac{\sqrt{\lambda_{\mathrm{\max}}\left(
\Sigma\right)  +\lambda_{\min}\left(  \Sigma\right)  /4}}{\lambda
_{\mathrm{\min}}\left(  \Sigma\right)  -\lambda_{\min}\left(  \Sigma\right)
/4},
\end{align*}
we have on $\mathcal{C},$%
\[
\left\Vert \widetilde{\beta}^{\circ}-\beta^{\circ}\right\Vert \leq
c_{1}(\Sigma)\left\Vert E\right\Vert \left\Vert V\right\Vert +c_{2}%
(\Sigma)\left\Vert F\right\Vert ,
\]
with probability%
\begin{align*}
\mathsf{P}\left[  \mathcal{C}\right]   &  \geq1-2d\exp\left[  -N\frac
{C\lambda_{\min}^{4}(\Sigma)}{16M^{2}\lambda_{\max}^{3}(\Sigma)}\right] \\
&  -C_{p}\left(  \left(  \frac{2\sqrt{\lambda_{\mathrm{\max}}\left(
\Sigma\right)  +\lambda_{\mathrm{\min}}\left(  \Sigma\right)  /4}+1}%
{\lambda_{\mathrm{\min}}\left(  \Sigma\right)  /2}\right)  ^{p}+1\right)  .
\end{align*}

\end{cor}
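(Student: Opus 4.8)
The plan is to obtain Corollary~\ref{cor9} as a direct specialization of Theorem~\ref{thm:perturbation} to the choice $\varepsilon=\rho=\lambda_{\min}(\Sigma)/4$. First I would verify that this choice is admissible in the theorem: it requires $\rho\in(0,\lambda_{\min}(\Sigma))$ and $\varepsilon\in(0,\lambda_{\min}(\Sigma)-\rho)$. Since $\lambda_{\min}(\Sigma)>0$ by assumption, $\rho=\lambda_{\min}(\Sigma)/4$ clearly lies in $(0,\lambda_{\min}(\Sigma))$, and $\varepsilon=\lambda_{\min}(\Sigma)/4<3\lambda_{\min}(\Sigma)/4=\lambda_{\min}(\Sigma)-\rho$, so both constraints are met.

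Next I would substitute $\varepsilon=\rho=\lambda_{\min}(\Sigma)/4$ into the constants $c_1(\Sigma,\varepsilon,\rho)$ and $c_2(\Sigma,\varepsilon,\rho)$ of Theorem~\ref{thm:perturbation}. Replacing $\rho$ by $\lambda_{\min}(\Sigma)/4$ and each occurrence of $\lambda_{\max}(\Sigma)+\varepsilon$ by $\lambda_{\max}(\Sigma)+\lambda_{\min}(\Sigma)/4$ turns $c_1(\Sigma,\varepsilon,\rho)$ into the displayed expression for $c_1(\Sigma)$, while the extra summand $\sqrt{\lambda_{\max}(\Sigma)+\varepsilon}/(\lambda_{\min}(\Sigma)-\varepsilon)$ becomes $\sqrt{\lambda_{\max}(\Sigma)+\lambda_{\min}(\Sigma)/4}/(\lambda_{\min}(\Sigma)-\lambda_{\min}(\Sigma)/4)$, giving $c_2(\Sigma)$. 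The bound $\Vert\widetilde{\beta}^{\circ}-\beta^{\circ}\Vert\leq c_1(\Sigma)\Vert E\Vert\Vert V\Vert+c_2(\Sigma)\Vert F\Vert$ on the event $\mathcal{C}$ then follows verbatim from Theorem~\ref{thm:perturbation}.

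Finally, for the probability statement I would invert the relation~(\ref{eN}) defining $\varepsilon_{\delta,N}$: solving $M\sqrt{\log(2d/\delta)/(NC)}\,\lambda_{\max}^{3/2}(\Sigma)/\lambda_{\min}(\Sigma)=\lambda_{\min}(\Sigma)/4$ for $\delta$ gives $\delta=2d\exp[-NC\lambda_{\min}^{4}(\Sigma)/(16M^{2}\lambda_{\max}^{3}(\Sigma))]$. Plugging this $\delta$ into the lower bound for $\mathsf{P}[\mathcal{C}]$ from Theorem~\ref{thm:perturbation}, and using $\lambda_{\min}(\Sigma)-\varepsilon-\rho=\lambda_{\min}(\Sigma)/2$ together with $\lambda_{\max}(\Sigma)+\varepsilon=\lambda_{\max}(\Sigma)+\lambda_{\min}(\Sigma)/4$ in the last term, yields the claimed bound. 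I do not expect a genuine obstacle here: the argument is entirely a substitution plus elementary algebraic simplification. The only points requiring care are the bookkeeping of which argument of $\varepsilon_{\delta,N}$ is being solved for, and the standing proviso inherited from Theorem~\ref{thm:perturbation} that the resulting $\delta$ and $C_p=\mathsf{E}[|E|^{p}]$ be small enough for the stated lower bound to be positive.
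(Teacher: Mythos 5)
Your proposal is correct and is exactly the intended derivation: the paper states Corollary~\ref{cor9} as a direct specialization of Theorem~\ref{thm:perturbation} with $\varepsilon=\rho=\lambda_{\min}(\Sigma)/4$, and your substitution into $c_1,c_2$, the check of admissibility, and the inversion of (\ref{eN}) to get $\delta=2d\exp[-NC\lambda_{\min}^{4}(\Sigma)/(16M^{2}\lambda_{\max}^{3}(\Sigma))]$ reproduce precisely the stated bounds. Nothing further is needed.
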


\section{Proofs}

\subsection{Proof of Theorem \ref{mainth}}

\label{proofs}{}

Let $\mathbf{X}_{t}=\left(  X_{t}^{1},\ldots,X_{t}^{N}\right)  $ be a vector
of i.i.d. copies of the exact solution to (\ref{eq:sde}), and define for fixed
$t,$
\[
w_{N}:=\underset{h\in\mathcal{H}_{K_{N}}}{\arg\min}\left\{  \frac{1}{N}%
\sum_{i=1}^{N}\left(  f(X_{T}^{i})-h(X_{t}^{i})\right)  ^{2}\right\}  .
\]
Further let us denote by $V,\widetilde{V}\in\mathbb{R}^{N}$ the column vectors
with coordinates
\[
V_{i}=\frac{f(X_{T}^{i})}{\sqrt{N}},\quad\widetilde{V}_{i}:=\frac
{f(X_{T}^{i,N})}{\sqrt{N}},\quad i=1,\ldots,N,
\]
respectively, and consider the $\mathbb{R}^{N\times K}$ matrices
\begin{align*}
\widetilde{Z}  &  =\left(  \psi_{k}\left(  X_{t}^{i,N}\right)  /\sqrt
{N},\,i=1,\ldots,N,\,k=1,\ldots,K\right)  ,\\
Z  &  =\left(  \psi_{k}\left(  X_{t}^{i}\right)  /\sqrt{N},\,i=1,\ldots
,N,\,k=1,\ldots,K\right)  ,
\end{align*}
respectively. Then we have
\[
\widetilde{w}_{N}=\widetilde{\beta}_{N}^{\top}\boldsymbol{\psi}_{K}\left(
\cdot\right)  ,\quad\widetilde{\beta}_{N}=\left(  \widetilde{Z}^{\top
}\widetilde{Z}\right)  ^{-1}\widetilde{Z}^{\top}\widetilde{V}=\widetilde
{Z}^{\dagger}\widetilde{V}%
\]
and
\[
w_{N}=\beta_{N}^{\top}\boldsymbol{\psi}_{K}\left(  \cdot\right)  ,\quad
\beta_{N}=\left(  Z^{\top}Z\right)  ^{-1}Z^{\top}{V}=Z^{\dagger}V\text{ }%
\]
with $\boldsymbol{\psi}_{K}=\left(  \psi_{1},\ldots,\psi_{K}\right)  ^{\top}.$
By using that
\[
\left\vert T_{M}\widetilde{w}^{N}\left(  x\right)  -T_{M}w^{N}\left(
x\right)  \right\vert \leq\left\vert \widetilde{w}^{N}\left(  x\right)
-w^{N}\left(  x\right)  \right\vert
\]
almost surely, one has for any event $\mathcal{C}\in\mathcal{F}$
\begin{gather*}
\left(  \mathsf{E}\left[  \int\left(  T_{M}\widetilde{w}^{N}\left(  x\right)
-w\left(  x\right)  \right)  ^{2}\mu_{t}(dx)\right]  \right)  ^{1/2}\leq\\
\left(  \mathsf{E}\left[  \int1_{\mathcal{C}}\left(  \widetilde{w}^{N}\left(
x\right)  -w^{N}\left(  x\right)  \right)  ^{2}\mu_{t}(dx)\right]  \right)
^{1/2}+2M_{f}\left(  \mathsf{P}\left[  \Omega\backslash\mathcal{C}\right]
\right)  ^{1/2}\\
+\left(  \mathsf{E}\left[  \int\left(  T_{M}w^{N}\left(  x\right)  -w\left(
x\right)  \right)  ^{2}\mu_{t}(dx)\right]  \right)  ^{1/2}\\
\leq M_{K}\sqrt{K}\left(  \mathsf{E}\left[  \left\Vert \widetilde{\beta}%
_{N}-\beta_{N}\right\Vert ^{2}1_{\mathcal{C}}\right]  \right)  ^{1/2}\\
+2M_{f}\left(  \mathsf{P}\left[  \Omega\backslash\mathcal{C}\right]  \right)
^{1/2}\\
+\left(  \mathsf{E}\left[  \int\left(  T_{M}w^{N}\left(  x\right)  -w\left(
x\right)  \right)  ^{2}\mu_{t}(dx)\right]  \right)  ^{1/2}.
\end{gather*}
Set
\begin{align*}
U  &  =\left(  \psi_{k}(X_{t}^{i}),\,i=1,\ldots,N,\,k=1,\ldots,K\right)
^{\top}\in\mathbb{R}^{N\times K},\\
V  &  =\left(  (f(X_{t}^{i,N})/\sqrt{N},\,i=1,\ldots,N\right)  ^{\top}%
\in\mathbb{R}^{N},\\
E  &  =\left(  (\psi_{k}(X_{t}^{i,N})-\psi_{k}(X_{t}^{i}))/\sqrt
{N},\,i=1,\ldots,N,\,k=1,\ldots,K\right)  \in\mathbb{R}^{N\times K},\\
F  &  =\left(  (f(X_{t}^{i,N})-f(X_{t}^{i}))/\sqrt{N},\,i=1,\ldots,N\right)
^{\top}\in\mathbb{R}^{N},
\end{align*}
then, with $\Sigma=\mathsf{E}\left[  UU^{\top}\right]  ,$ $d=K,$ and
$\left\Vert U_{i}\right\Vert \leq\sqrt{K}M_{K},$ Corollary \ref{cor9} implies%
\[
\left\Vert \widetilde{\beta}^{\circ}-\beta^{\circ}\right\Vert ^{2}\leq
2c_{1}^{2}M_{f}^{2}\left\Vert E\right\Vert ^{2}+2c_{2}^{2}\left\Vert
F\right\Vert ^{2},
\]
on a set $\mathcal{C}$ with probability%
\begin{align*}
\mathsf{P}\left[  \mathcal{C}\right]   &  \geq1-2K\exp\left[  -N\frac
{C\varkappa_{\circ}^{4}}{16KM_{K}^{2}\left(  \varkappa^{\circ}\right)  ^{3}%
}\right] \\
&  -\mathsf{E}[\Vert E\Vert^{p}]\left(  \left(  \frac{\sqrt{5\varkappa^{\circ
}}+1}{\varkappa_{\circ}/2}\right)  ^{p}+1\right)  ,
\end{align*}
where constants $c_{1},$ $c_{2}$ only depend on $\varkappa_{\circ},$
$\varkappa^{\circ}.$ In particular we may take%
\begin{align*}
c_{1}  &  :=\frac{44+8\sqrt{5/\varkappa^{\circ}}}{\varkappa_{\circ}},\text{
\ \ and}\\
c_{2}  &  :=d_{1}+\frac{2\sqrt{5\varkappa^{\circ}}}{3\varkappa_{\circ}}%
=\frac{132+2\sqrt{5\varkappa^{\circ}}+24\sqrt{5/\varkappa^{\circ}}}%
{3\varkappa_{\circ}}.
\end{align*}
As a consequence,
\begin{align*}
\mathsf{E}\left[  \Vert\widetilde{\beta}_{N}-\beta_{N}\Vert^{2}1_{\mathcal{C}%
}\right]   &  \leq2c_{1}^{2}M_{f}^{2}\mathsf{E}\left[  \Vert E\Vert
^{2}\right]  +2c_{2}^{2}\mathsf{E}\left[  \Vert F\Vert^{2}\right] \\
&  \leq2c_{1}^{2}M_{f}^{2}\left(  \mathsf{E}\left[  \frac{1}{N}\sum_{i=1}%
^{N}\sum_{k=1}^{K}\left(  \psi_{k}\left(  X_{t}^{i,N}\right)  -\psi_{k}\left(
X_{t}^{i}\right)  \right)  ^{2}\right]  \right) \\
&  +2c_{2}^{2}\mathsf{E}\left[  \frac{1}{N}\sum_{i=1}^{N}\left(  f\left(
X_{t}^{i,N}\right)  -f\left(  X_{t}^{i}\right)  \right)  ^{2}\right] \\
&  \leq\left(  2c_{1}^{2}M_{f}^{2}\sum_{k=1}^{K}L_{k}^{2}+2c_{2}^{2}L_{f}%
^{2}\right)  \mathsf{E}\left[  \left\vert X_{t}^{\cdot,N}-X_{t}^{\cdot
}\right\vert ^{2}\right]
\end{align*}
We further have for $p\geq2,$
\begin{align*}
\left(  \mathsf{E}\left[  \left\vert E\right\vert ^{p}\right]  \right)
^{1/p}  &  \leq\left(  \mathsf{E}\left[  \left(  \frac{1}{N}\sum_{i=1}^{N}%
\sum_{k=1}^{K}\left(  \psi_{k}\left(  X_{t}^{i,N}\right)  -\psi_{k}\left(
X_{t}^{i}\right)  \right)  ^{2}\right)  ^{p/2}\right]  \right)  ^{1/p}\\
&  =\left(  \left(  \mathsf{E}\left[  \left(  \frac{1}{N}\sum_{i=1}^{N}%
\sum_{k=1}^{K}\left(  \psi_{k}\left(  X_{t}^{i,N}\right)  -\psi_{k}\left(
X_{t}^{i}\right)  \right)  ^{2}\right)  ^{p/2}\right]  \right)  ^{2/p}\right)
^{1/2}\\
&  \leq\left(  \frac{1}{N}\sum_{i=1}^{N}\sum_{k=1}^{K}\left(  \mathsf{E}%
\left[  \left(  \psi_{k}\left(  X_{t}^{i,N}\right)  -\psi_{k}\left(  X_{t}%
^{i}\right)  \right)  ^{p}\right]  \right)  ^{2/p}\right)  ^{1/2}\\
&  \leq\left(  \frac{1}{N}\sum_{i=1}^{N}\sum_{k=1}^{K}L_{k}^{2}\left(
\mathsf{E}\left[  \left\vert X_{t}^{i,N}-X_{t}^{i}\right\vert ^{p}\right]
\right)  ^{2/p}\right)  ^{1/2}\\
&  =\sqrt{\sum_{k=1}^{K}L_{k}^{2}}\left(  \mathsf{E}\left[  \left\vert
X_{t}^{\cdot,N}-X_{t}^{\cdot}\right\vert ^{p}\right]  \right)  ^{1/p}.
\end{align*}
Combining the latter bounds with (\ref{Koh}) and Theorem~11.3 from
\cite{gyorfi2002distribution}, and taking $p=2$ for simplicity, we get (using
subadditivity of the squareroot)%

\begin{gather*}
\left(  \mathsf{E}\left[  \int\left(  T_{M}\widetilde{w}^{N}\left(  x\right)
-w\left(  x\right)  \right)  ^{2}\mu_{t}(dx)\right]  \right)  ^{1/2}\leq\\
\leq M_{K}\sqrt{2K}\left(  c_{1}M_{f}\sqrt{\sum_{k=1}^{K}L_{k}^{2}}+c_{2}%
L_{f}\right)  \frac{C_{2}}{\sqrt{N}}\\
+2M_{f}\sqrt{2K}\exp\left[  -N\frac{C\varkappa_{\circ}^{4}}{32KM_{K}%
^{2}\left(  \varkappa^{\circ}\right)  ^{3}}\right] \\
2M_{f}\sqrt{\sum_{k=1}^{K}L_{k}^{2}}\left(  \frac{\sqrt{5\varkappa^{\circ}}%
+1}{\varkappa_{\circ}/2}+1\right)  \frac{C_{2}}{\sqrt{N}}\\
+c_{3}M_{f}\frac{\sqrt{1+\log N}\sqrt{K}}{\sqrt{N}}\\
+c_{4}\inf_{h\in\mathcal{H}_{K}}\left(  \int\left(  h(x)-w(t,x)\right)
^{2}\mu_{t}(dx)\right)  ^{1/2}%
\end{gather*}
for universal constants $c_{3},c_{4}.$ Summarizing, and using (\ref{lk}),
yields (\ref{res}).

\subsection{Proof of Lemma~\ref{lem23}}

Let us observe that for $j<\mathcal{J},$%
\begin{gather*}
g_{\tau_{j+1}^{\ast}}(Z_{\tau_{j+1}})-g_{\tau_{N,j+1}}(Z_{\tau_{N,j+1}%
})=\left(  g_{j+1}(Z_{j+1})-g_{\tau_{N,j+1}}(Z_{\tau_{N,j+1}})\right)
1_{\{\tau_{j+1}^{\ast}=j+1,\tau_{N,j+1}>j+1\}}\\
+\left(  g_{\tau_{j+1}^{\ast}}(Z_{\tau_{j+1}^{\ast}})-g_{j}(Z_{j})\right)
1_{\{\tau_{j+1}^{\ast}>j+1,\tau_{N,j+1}=j+1\}}\\
+\left(  g_{\tau_{j+1}^{\ast}}(Z_{\tau_{j+1}^{\ast}})-g_{\tau_{N,j+1}}%
(Z_{\tau_{N,j+1}})\right)  1_{\{\tau_{j+1}^{\ast}>j+1,\tau_{N,j+1}>j+1\}}.
\end{gather*}
By abbreviating temporarily in this proof $\mathsf{E:=E}_{\mathcal{G}_{j+1}},$
and denoting $\mathcal{R}_{N,j}:=\mathsf{E}\left[  \left.  g_{\tau_{j+1}%
^{\ast}}(Z_{\tau_{j+1}^{\ast}})-g_{\tau_{N,j+1}}(Z_{\tau_{N,j+1}})\right\vert
Z_{j}\right]  ,$ we have $\mathcal{R}_{N,j}\geq0$ almost surely, and%
\begin{align}
\mathcal{R}_{N,j}  &  =\mathsf{E}\left[  \left.  \left(  g_{j+1}%
(Z_{j+1})-\mathsf{E}\left[  \left.  g_{\tau_{N,j+2}}(Z_{\tau_{N,j+2}%
})\right\vert Z_{j+1}\right]  \right)  1_{\{\tau_{j+1}^{\ast}=j+1,\tau
_{N,j+1}>j+1\}}\right\vert Z_{j}\right] \nonumber\\
&  +\mathsf{E}\left[  \left.  \left(  \mathsf{E}\left[  \left.  g_{\tau
_{j+2}^{\ast}}(Z_{\tau_{j+2}^{\ast}})\right\vert Z_{j+1}\right]
-g_{j+1}(Z_{j+1})\right)  1_{\{\tau_{j+1}^{\ast}>j+1,\tau_{N,j+1}%
=j+1\}}\right\vert Z_{j}\right] \nonumber\\
&  +\mathsf{E}\left[  \left.  \mathsf{E}\left[  \left.  g_{\tau_{j+2}^{\ast}%
}(Z_{\tau_{j+2}^{\ast}})-g_{\tau_{N,j+2}}(Z_{\tau_{N,j+2}})\right\vert
Z_{j+1}\right]  1_{\{\tau_{j+1}^{\ast}>j+1,\tau_{N,j+1}>j+1\}}\right\vert
Z_{j}\right] \nonumber\\
&  =T_{1}+T_{2}+\mathsf{E}\left[  \left.  \mathcal{R}_{N,j+1}1_{\{\tau
_{j+1}^{\ast}>j+1,\tau_{N,j+1}>j+1\}}\right\vert Z_{j}\right]  . \label{c1}%
\end{align}
For $T_{1}$ we have%
\begin{align*}
T_{1}  &  =\mathsf{E}\left[  \left.  \left(  g_{j+1}(Z_{j+1})-\mathsf{E}%
\left[  \left.  g_{\tau_{j+2}^{\ast}}(Z_{\tau_{j+2}^{\ast}})\right\vert
Z_{j+1}\right]  \right)  1_{\{\tau_{j+1}^{\ast}=j+1,\tau_{N,j+1}%
>j+1\}}\right\vert Z_{j}\right] \\
&  +\mathsf{E}\left[  \left.  \left(  \mathsf{E}\left[  \left.  g_{\tau
_{j+2}^{\ast}}(Z_{\tau_{j+2}^{\ast}})\right\vert Z_{j+1}\right]
-\mathsf{E}\left[  \left.  g_{\tau_{N,j+2}}(Z_{\tau_{N,j+2}})\right\vert
Z_{j+1}\right]  \right)  1_{\{\tau_{j+1}^{\ast}=j+1,\tau_{N,j+1}%
>j+1\}}\right\vert Z_{j}\right]  ,
\end{align*}
and since
\begin{align*}
C_{N,j+1}(Z_{j+1})  &  \geq g_{j+1}(Z_{j+1})\geq\mathsf{E}\left[  \left.
g_{\tau_{j+2}^{\ast}}(Z_{\tau_{j+2}^{\ast}})\right\vert Z_{j+1}\right] \\
&  =C_{j+1}^{\ast}(Z_{j+1})\geq\mathsf{E}\left[  \left.  g_{\tau_{N,j+2}%
}(Z_{\tau_{N,j+2}})\right\vert Z_{j+1}\right]
\end{align*}
on $\{\tau_{j+1}^{\ast}=j+1,\tau_{N,j+1}>j+1\},$ we get
\begin{align}
0  &  \leq T_{1}\leq\mathsf{E}\left[  \left.  \left(  C_{N,j+1}(Z_{l+1}%
)-C_{j+1}^{\ast}(Z_{j+1})\right)  1_{\{\tau_{j+1}^{\ast}=j+1,\tau
_{N,j+1}>j+1\}}\right\vert Z_{j}\right] \nonumber\\
&  +\mathsf{E}\left[  \left.  \mathcal{R}_{N,j+1}1_{\{\tau_{j+1}^{\ast
}=j+1,\tau_{N,j+1}>j+1\}}\right\vert Z_{j}\right]  . \label{c2}%
\end{align}
Similarly, for $T_{2}$ we have
\begin{equation}
0\leq T_{2}\leq\mathsf{E}\left[  \left.  \left(  C_{j+1}^{\ast}(Z_{j+1}%
)-C_{N,j+1}(Z_{j+1})\right)  1_{\{\tau_{j+1}^{\ast}>j+1,\tau_{N,j+1}%
=j+1\}}\right\vert Z_{j}\right]  . \label{c3}%
\end{equation}
Combining (\ref{c1}), (\ref{c2}), and (\ref{c3}), yields%
\[
\mathcal{R}_{N,j}\leq\mathsf{E}\left[  \left.  \left\vert C_{N,j+1}%
(Z_{j+1})-C_{j+1}^{\ast}(Z_{j+1})\right\vert \right\vert Z_{j}\right]
+\mathsf{E}\left[  \left.  \mathcal{R}_{N,j+1}\right\vert Z_{j}\right]  .
\]
By straightforward induction, using the tower property and the final condition
$\mathcal{R}_{N,\mathcal{J}-1}=0,$ we so obtain%
\[
0\leq C_{j}^{\ast}\left(  Z_{j}\right)  -\overline{C}_{N,j}\left(
Z_{j}\right)  \leq\sum_{l=j+1}^{\mathcal{J}-1}\mathsf{E}\left[  \left.
\left\vert C_{N,l}(Z_{l})-C_{l}^{\ast}(Z_{l})\right\vert \right\vert
Z_{j}\right]  .
\]
Taking on both sides the $L_{p}$-norm, applying the triangle inequality, and
using that%
\[
\mathsf{E}\left[  \mathsf{E}\left[  \left.  \left\vert C_{N,l}(Z_{j}%
)-C_{l}^{\ast}(Z_{l})\right\vert \right\vert Z_{j}\right]  ^{p}\right]
\leq\mathsf{E}\left[  \left\vert C_{N,l}(Z_{l})-C_{l}^{\ast}(Z_{l})\right\vert
^{p}\right]  ,
\]
finally gives (\ref{eq:bound_1}).

\subsection{Proof of Theorem~\ref{thm_main}}

Theorem \ref{mainth} implies,%
\[
\mathsf{E}_{\mathcal{G}_{j+1}}\left[  \left\Vert C_{N,j}-\overline{C}%
_{N,j}\right\Vert _{L_{2}(\mu_{j})}^{2}\right]  \leq c_{1}^{2}\epsilon
_{N,K}^{2}+c_{2}^{2}\inf_{h\in\mathcal{H}_{K}}\left\Vert \overline{C}%
_{N,j}\left(  \cdot\right)  -h\right\Vert _{L_{2}(\mu_{j})}^{2},
\]
almost surely, for some $\eta,c_{1},c_{2}>0,$ which do not depend on $j,K,$
and $N.$ Hence, for the unconditional expectation we get,%
\[
\mathsf{E}\left[  \left\Vert C_{N,j}-\overline{C}_{N,j}\right\Vert _{L_{2}%
(\mu_{j})}^{2}\right]  \leq c_{1}^{2}\epsilon_{N,K}^{2}+c_{2}^{2}\inf
_{h\in\mathcal{H}_{K}}\mathsf{E}\left[  \left\Vert \overline{C}_{N,j}\left(
\cdot\right)  -h\right\Vert _{L_{2}(\mu_{j})}^{2}\right]
\]
and so
\begin{equation}
\left\Vert C_{N,j}-\overline{C}_{N,j}\right\Vert _{L_{2}(\mu_{j},\mathbb{P}%
)}\leq c_{1}\epsilon_{N,K}+c_{2}\inf_{h\in\mathcal{H}_{K}}\left\Vert
\overline{C}_{N,j}\left(  \cdot\right)  -h\right\Vert _{L_{2}(\mu
_{j},\mathbb{P})} .\label{gy}%
\end{equation}
By using (\ref{gy}) and the unconditional expectation applied to Lemma
\ref{lem23} with $p=2,$ we get%
\begin{align*}
\left\Vert C_{N,j}-C_{j}^{\ast}\right\Vert _{L_{2}(\mu_{j},\mathbb{P})}  &
\leq\left\Vert C_{N,j}-\overline{C}_{N,j}\right\Vert _{L_{2}(\mu
_{j},\mathbb{P})}+\left\Vert \overline{C}_{N,j}-C_{j}^{\ast}\right\Vert
_{L_{2}(\mu_{j},\mathbb{P})}\\
&  \leq c_{1}\epsilon_{N,K}+c_{2}\inf_{h\in\mathcal{H}_{K}}\left\Vert
C_{j}^{\ast}-h\right\Vert _{L_{2}(\mu_{j})}\\
&  +(1+c_{2})\left\Vert \overline{C}_{N,j}-C_{j}^{\ast}\right\Vert _{L_{2}%
(\mu_{j},\mathbb{P})}\\
&  \leq\Delta_{N,K}+(1+c_{2})\sum_{l=j+1}^{\mathcal{J}}\left\Vert
C_{N,l}-C_{l}^{\ast}\right\Vert _{L_{2}(\mu_{l},\mathbb{P})}%
\end{align*}
with $c=\max\{c_{1},c_{2}\}.$ We prove the statement by induction. Suppose
that the inequality \eqref{eq: main} holds for $j=k,$ then
\begin{align*}
\left\Vert C_{N,\mathcal{J}-k-1}-C_{\mathcal{J}-k-1}^{\ast}\right\Vert
_{L_{2}(\mu_{\mathcal{J}-k-1},\mathbb{P})}  &  \leq\Delta_{N,K}\\
&  +(1+c_{2})\sum_{l=0}^{k}\left\Vert C_{N,\mathcal{J}-l}-C_{\mathcal{J}%
-l}^{\ast}\right\Vert _{L_{2}(\mu_{\mathcal{J}-l},\mathbb{P})},\\
&  \leq\Delta_{N,K}+\Delta_{N,K}(1+c)\sum_{l=0}^{k}(2+c)^{l}\\
&  =\Delta_{N,K}(1+((2+c)^{k+1}-1))\\
&  =\Delta_{N,K}(2+c)^{k+1}%
\end{align*}
and \eqref{eq: main} holds also for $j=k+1.$

\section{Appendix}

In this section we present two auxiliary lemmas that were needed in
Section~\ref{pertan}.

\begin{lem}
\label{App} Let $\rho>0$ and the matrix $Z\in\mathbb{R}^{N\times d}$ be of
full rank with $N>d.$ Let $Z$ and $E\in\mathbb{R}^{N\times d}$ be such that
\begin{equation}
\lambda_{\mathrm{min}}\left(  Z^{\top}Z\right)  -\left(  2\Vert Z\Vert
+1\right)  \Vert E\Vert>\rho,\text{ \ \ }\Vert E\Vert<1. \label{cond}%
\end{equation}
Then we have
\begin{equation}
\Vert(Z+E)^{\dagger}-Z^{\dagger}\Vert\leq\frac{\Vert E\Vert}{\rho}\left[
1+\frac{\left(  2\Vert Z\Vert+1\right)  \Vert Z\Vert}{\rho}\right]  .
\label{onA}%
\end{equation}

\end{lem}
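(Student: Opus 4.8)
The plan is to reduce the bound to the explicit representation $A^{\dagger}=(A^{\top}A)^{-1}A^{\top}$, valid for any $A\in\mathbb{R}^{N\times d}$ of full column rank with $N>d$, combined with the second resolvent identity $(A^{\top}A)^{-1}-(B^{\top}B)^{-1}=(A^{\top}A)^{-1}\bigl(B^{\top}B-A^{\top}A\bigr)(B^{\top}B)^{-1}$. The first thing I would check is that $\widetilde{Z}:=Z+E$ still has full column rank. Writing $\widetilde{Z}^{\top}\widetilde{Z}=Z^{\top}Z+\Delta$ with $\Delta:=Z^{\top}E+E^{\top}Z+E^{\top}E$ symmetric and $\|\Delta\|\le 2\|Z\|\,\|E\|+\|E\|^{2}\le(2\|Z\|+1)\|E\|$ (using $\|E\|<1$, so $\|E\|^{2}\le\|E\|$), monotonicity of the smallest eigenvalue under symmetric perturbations (Weyl) together with hypothesis (\ref{cond}) gives
\[
\lambda_{\min}\bigl(\widetilde{Z}^{\top}\widetilde{Z}\bigr)\ \ge\ \lambda_{\min}(Z^{\top}Z)-(2\|Z\|+1)\|E\|\ >\ \rho\ >\ 0 .
\]
Hence $\widetilde{Z}$ has full column rank, $\widetilde{Z}^{\dagger}=(\widetilde{Z}^{\top}\widetilde{Z})^{-1}\widetilde{Z}^{\top}$, and $\|(\widetilde{Z}^{\top}\widetilde{Z})^{-1}\|<1/\rho$; the same hypothesis also gives $\lambda_{\min}(Z^{\top}Z)>\rho$, whence $\|Z^{\dagger}\|\le\|(Z^{\top}Z)^{-1}\|\,\|Z\|<\|Z\|/\rho$.

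The core step is then purely algebraic: inserting $\pm(\widetilde{Z}^{\top}\widetilde{Z})^{-1}Z^{\top}$ and applying the resolvent identity with $\widetilde{Z}^{\top}\widetilde{Z}-Z^{\top}Z=\Delta$ yields
\[
\widetilde{Z}^{\dagger}-Z^{\dagger}=(\widetilde{Z}^{\top}\widetilde{Z})^{-1}E^{\top}-(\widetilde{Z}^{\top}\widetilde{Z})^{-1}\,\Delta\,Z^{\dagger}.
\]
Taking operator norms, using submultiplicativity, and substituting $\|(\widetilde{Z}^{\top}\widetilde{Z})^{-1}\|<1/\rho$, $\|\Delta\|\le(2\|Z\|+1)\|E\|$ and $\|Z^{\dagger}\|<\|Z\|/\rho$ collapses directly to
\[
\|\widetilde{Z}^{\dagger}-Z^{\dagger}\|\ \le\ \frac{\|E\|}{\rho}+\frac{(2\|Z\|+1)\|E\|}{\rho}\cdot\frac{\|Z\|}{\rho}\ =\ \frac{\|E\|}{\rho}\Bigl[1+\frac{(2\|Z\|+1)\|Z\|}{\rho}\Bigr],
\]
which is (\ref{onA}).

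I do not expect a genuine obstacle here. The two points requiring care are: first, verifying that $Z+E$ retains full column rank, since this is exactly what legitimizes the explicit pseudoinverse formula and hence the resolvent identity, and it is precisely what the first inequality in (\ref{cond}) is engineered to buy; and second, the constant bookkeeping — absorbing $\|E\|^{2}$ into $\|E\|$ via $\|E\|<1$ so that the coefficient $2\|Z\|+1$ in the hypothesis matches the one in the conclusion, and bounding $\|Z^{\dagger}\|$ by $\|Z\|/\rho$ rather than the tighter $1/\sqrt{\lambda_{\min}(Z^{\top}Z)}$ so that the result comes out in the stated form. An alternative would be to invoke Wedin's general perturbation theorem for the Moore--Penrose pseudoinverse, but the elementary argument above is shorter and reproduces the stated constants verbatim.
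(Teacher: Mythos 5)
Your proof is correct and takes essentially the same route as the paper's: the same splitting $(Z+E)^{\dagger}-Z^{\dagger}=\bigl((Z+E)^{\top}(Z+E)\bigr)^{-1}E^{\top}+\bigl[\bigl((Z+E)^{\top}(Z+E)\bigr)^{-1}-(Z^{\top}Z)^{-1}\bigr]Z^{\top}$, the same resolvent identity with $\Delta=Z^{\top}E+E^{\top}Z+E^{\top}E$, and the same lower bound $\lambda_{\min}(Z^{\top}Z+\Delta)>\rho$ obtained from (\ref{cond}) after absorbing $\Vert E\Vert^{2}$ via $\Vert E\Vert<1$. The only difference is cosmetic: you state explicitly the full-rank check for $Z+E$ and the bound $\Vert Z^{\dagger}\Vert\leq\Vert(Z^{\top}Z)^{-1}\Vert\,\Vert Z\Vert$, which the paper uses implicitly.
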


\begin{proof}
Denote
\[
\Delta=Z^{\top}E+E^{\top}Z+E^{\top}E,
\]
then using the identity
\begin{align*}
\left(  (Z+E)^{\top}(Z+E)\right)  ^{-1}-\left(  Z^{\top}Z\right)  ^{-1}  &
=-\left(  (Z+E)^{\top}(Z+E)\right)  ^{-1}\Delta\left(  Z^{\top}Z\right)
^{-1}\\
&  =-\left(  Z^{\top}Z+\Delta\right)  ^{-1}\Delta\left(  Z^{\top}Z\right)
^{-1},
\end{align*}
we derive%
\begin{multline}
\Vert(((Z+E)^{\top}(Z+E))^{-1}-(Z^{\top}Z)^{-1})Z^{\top}\Vert\\
\leq\Vert(Z^{\top}Z+\Delta)^{-1}\Vert\Vert(Z^{\top}Z)^{-1}\Vert(2\Vert
Z\Vert+1)\Vert E\Vert\Vert Z\Vert\\
\leq\frac{(2\Vert Z\Vert+1)\Vert E\Vert\Vert Z\Vert}{\rho^{2}}, \label{onA11}%
\end{multline}
since we have $\Vert(Z^{\top}Z)^{-1}\Vert=\lambda_{\mathrm{min}}^{-1}\left(
Z^{\top}Z\right)  <\rho^{-1}$ and
\begin{align*}
\lambda_{\mathrm{min}}\left(  Z^{\top}Z+\Delta\right)   &  =\underset
{\left\vert x\right\vert =1}{\inf}x^{\top}\left(  Z^{\top}Z+\Delta\right)
x\geq\underset{\left\vert x\right\vert =1}{\inf}x^{\top}Z^{\top}%
Zx+\underset{\left\vert x\right\vert =1}{\inf}x^{\top}\Delta x\\
&  \geq\lambda_{\mathrm{min}}\left(  Z^{\top}Z\right)  -\Vert\Delta\Vert
\geq\lambda_{\mathrm{min}}\left(  Z^{\top}Z\right)  -\left(  2\Vert
Z\Vert+1\right)  \Vert E\Vert\\
&  >\rho>0.
\end{align*}
Analogously we have
\begin{equation}
\left\Vert \left(  (Z+E)^{\top}(Z+E)\right)  ^{-1}E\right\Vert =\left\Vert
\left(  Z^{\top}Z+\Delta\right)  ^{-1}E\right\Vert \leq\frac{\left\Vert
E\right\Vert }{\rho}, \label{onA2}%
\end{equation}
and then (\ref{onA}) follows by (\ref{onA11}), (\ref{onA2}), and the triangle inequality.
\end{proof}

\begin{lem}
\label{lem:spectrum_subgaussian} Let $X_{1},\ldots,X_{N}$ be independent
identically distributed random vectors in $\mathbb{R}^{d}$ such that%
\[
\mathsf{E}\left[  X_{1}X_{1}^{\top}\right]  =\Sigma
\]
and for some $M>0,$ $\Vert X_{1}\Vert\leq M$ almost surely. Then for all
$\delta\in(0,1),$
\begin{multline}
\mathbb{P}\left(  \left\{  \lambda_{\max}\left(  \frac{1}{N}\sum_{i=1}%
^{N}X_{i}X_{i}^{\top}\right)  >\lambda_{\max}\left(  \Sigma\right)
+\varepsilon_{\delta,N}\right\}  \cup\right. \label{lemed}\\
\left.  \left\{  \lambda_{\min}\left(  \frac{1}{N}\sum_{i=1}^{N}X_{i}%
X_{i}^{\top}\right)  <\lambda_{\min}\left(  \Sigma\right)  -\varepsilon
_{\delta,N}\right\}  \right)  \leq\delta,
\end{multline}
where
\begin{equation}
\varepsilon_{\delta,N}=M\sqrt{\frac{\log(2d/\delta)}{NC}}\frac{\lambda_{\max
}^{3/2}(\Sigma)}{\lambda_{\min}(\Sigma)} \label{epdel}%
\end{equation}
for some absolute constant $C>0,$ provided $N$ is large enough such that
\begin{equation}
M\sqrt{\frac{\log(2d/\delta)}{NC}}\leq\lambda_{\max}^{1/2}(\Sigma).
\label{epdelc}%
\end{equation}

\end{lem}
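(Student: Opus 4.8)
The plan is to reduce the two-sided spectral statement for the empirical second moment matrix $\widehat{\Sigma}:=\frac1N\sum_{i=1}^{N}X_iX_i^{\top}$ to a single operator-level concentration bound, and to obtain the latter from the \emph{matrix Chernoff inequality} applied to the i.i.d.\ sum of the rank-one positive semidefinite matrices $X_iX_i^{\top}$. By Weyl's perturbation inequality it would in principle suffice to control $\Vert\widehat{\Sigma}-\Sigma\Vert$, but passing instead through \emph{relative} deviations $\lambda_{\min}(\widehat\Sigma)\gtrless(1\mp\theta)\lambda_{\min}(\Sigma)$, $\lambda_{\max}(\widehat\Sigma)\gtrless(1\pm\theta)\lambda_{\max}(\Sigma)$ is what naturally produces the dependence on $\lambda_{\min}(\Sigma)$ and $\lambda_{\max}(\Sigma)$ appearing in \eqref{epdel}. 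The essential point is that a genuine matrix concentration tool is required here: we need uniform control of the whole spectrum of $\widehat\Sigma$, and the $\log d$ (rather than $d$) factor in \eqref{epdel} is precisely the fingerprint of such an inequality, so a scalar Bernstein bound combined with an $\varepsilon$-net over the sphere would be too lossy.

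Concretely, since $\Vert X_i\Vert\le M$ a.s.\ we have $\lambda_{\max}(X_iX_i^{\top})=\Vert X_i\Vert^{2}\le M^{2}=:R$ a.s., while $\lambda_{\min}\bigl(\sum_{i}\mathsf{E}[X_iX_i^{\top}]\bigr)=N\lambda_{\min}(\Sigma)$ and $\lambda_{\max}\bigl(\sum_{i}\mathsf{E}[X_iX_i^{\top}]\bigr)=N\lambda_{\max}(\Sigma)$. The matrix Chernoff bound then yields, for every $\theta\ge0$,
\begin{align*}
\mathbb{P}\bigl(\lambda_{\min}(\widehat{\Sigma})\le(1-\theta)\lambda_{\min}(\Sigma)\bigr) &\le d\exp\bigl(-\tfrac{1}{2}\theta^{2}N\lambda_{\min}(\Sigma)/M^{2}\bigr),\\
\mathbb{P}\bigl(\lambda_{\max}(\widehat{\Sigma})\ge(1+\theta)\lambda_{\max}(\Sigma)\bigr) &\le d\exp\bigl(-\tfrac{1}{3}\theta^{2}N\lambda_{\max}(\Sigma)/M^{2}\bigr),
\end{align*}
where the first line is trivially valid for $\theta\ge1$ because $\widehat\Sigma\succeq0$, and the exponential majorizations of the Chernoff functions are the standard ones on $[0,1)$ and $[0,1]$ respectively. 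Now fix a suitable absolute constant $C$ (one checks $C=1/3$ works) and set $\varepsilon^{\ast}:=M\lambda_{\max}^{1/2}(\Sigma)\sqrt{\log(2d/\delta)/(CN)}$. Taking $\theta=\varepsilon^{\ast}/\lambda_{\min}(\Sigma)$ in the first line and $\theta=\varepsilon^{\ast}/\lambda_{\max}(\Sigma)$ in the second, the hypothesis \eqref{epdelc} is exactly what guarantees $\theta\le1$ in the second line (so the bound applies there), and substituting these choices, together with $\lambda_{\max}(\Sigma)\ge\lambda_{\min}(\Sigma)$, makes each right-hand side at most $\delta/2$; a union bound then gives \eqref{lemed} with $\varepsilon^{\ast}$ in place of $\varepsilon_{\delta,N}$.

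It remains to pass from $\varepsilon^{\ast}$ to $\varepsilon_{\delta,N}$: since $\lambda_{\max}(\Sigma)\ge\lambda_{\min}(\Sigma)>0$ we have $\varepsilon^{\ast}\le\varepsilon^{\ast}\cdot\lambda_{\max}(\Sigma)/\lambda_{\min}(\Sigma)=\varepsilon_{\delta,N}$, so the events in \eqref{lemed} with threshold $\varepsilon_{\delta,N}$ are contained in the corresponding events with the smaller threshold $\varepsilon^{\ast}$, and hence also have probability at most $\delta$. I expect the only real subtlety to be the bookkeeping: confirming that \eqref{epdelc} matches the admissible range of the matrix Chernoff bound and that the two exponents can be simultaneously pushed below $\log(2d/\delta)$ with one absolute $C$; apart from that the argument is a direct application of a known inequality. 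As an alternative one could invoke the matrix Bernstein inequality for $\sum_{i}(X_iX_i^{\top}-\Sigma)$ with variance proxy $\bigl\Vert\sum_{i}\mathsf{E}[(X_iX_i^{\top}-\Sigma)^{2}]\bigr\Vert\le NM^{2}\lambda_{\max}(\Sigma)$ (using $\mathsf{E}[(X_iX_i^{\top})^{2}]=\mathsf{E}[\Vert X_i\Vert^{2}X_iX_i^{\top}]\preceq M^{2}\Sigma$), reaching the same conclusion up to the values of the constants.
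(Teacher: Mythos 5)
Your proof is correct, but it takes a genuinely different route from the paper's. The paper reduces both eigenvalue deviations to the single operator-norm deviation $\bigl\Vert \tfrac1N\sum_i X_iX_i^{\top}-\Sigma\bigr\Vert$: Weyl's inequality handles $\lambda_{\max}$, while for $\lambda_{\min}$ it uses the cruder perturbation bound $\vert\lambda_{\min}(A)-\lambda_{\min}(B)\vert\le\Vert B^{-1}\Vert\,\Vert B\Vert\,\Vert B-A\Vert$, which is precisely where the condition-number factor $\lambda_{\max}(\Sigma)/\lambda_{\min}(\Sigma)$ in \eqref{epdel} originates; the norm deviation is then controlled by Theorem~5.44 of Vershynin (a covariance-estimation bound of matrix-Bernstein type), with \eqref{epdelc} ensuring that the subgaussian branch of the $\max$ in that theorem dominates. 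You instead apply the matrix Chernoff inequality directly to the spectrum of the empirical matrix, which bypasses the eigenvalue-perturbation step, supplies an explicit admissible absolute constant ($C=1/3$), and in fact establishes the sharper statement with the smaller threshold $\varepsilon^{\ast}=M\lambda_{\max}^{1/2}(\Sigma)\sqrt{\log(2d/\delta)/(CN)}$, from which \eqref{lemed} with $\varepsilon_{\delta,N}=\varepsilon^{\ast}\lambda_{\max}(\Sigma)/\lambda_{\min}(\Sigma)$ follows by monotonicity of the events; in your argument \eqref{epdelc} plays the parallel role of keeping the upper-tail parameter $\theta$ in $[0,1]$ where the simplified bound $e^{-\theta^{2}\mu_{\max}/(3R)}$ is valid. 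Your treatment of the lower tail for $\theta\ge1$ is acceptable (for $\theta>1$ the event is empty; at $\theta=1$ one passes to the limit $\theta\uparrow1$ in the Chernoff bound), and since the lemma only asserts the existence of some absolute $C$, fixing $C=1/3$ is legitimate. Your closing alternative via matrix Bernstein is essentially the paper's route, so the Chernoff argument is the genuinely new ingredient: it buys sharper constants and a cleaner derivation, at the price of invoking a slightly more specialized tool.
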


\begin{proof}
For $z>0$ we have%
\begin{gather}
\mathrm{P}\left[  \lambda_{\max}\left(  \frac{1}{N}\sum_{i=1}^{N}X_{i}%
X_{i}^{\top}\right)  -\lambda_{\max}\left(  \Sigma\right)  >z\right]
\label{oth1}\\
\leq\mathrm{P}\left[  \left\Vert \frac{1}{N}\sum_{i=1}^{N}X_{i}X_{i}^{\top
}-\Sigma\right\Vert >z\right]  .\nonumber
\end{gather}
On the other hand, since for positive matrices $A,B,$%
\[
\left\vert \lambda_{\min}\left(  A\right)  -\lambda_{\min}\left(  B\right)
\right\vert \leq\left\Vert B^{-1}\right\Vert \left\Vert B\right\Vert
\left\Vert B-A\right\Vert ,
\]
we have for $0<z<\lambda_{\min}\left(  \Sigma\right)  ,$%
\begin{align}
&  \mathrm{P}\left[  \lambda_{\min}\left(  \frac{1}{N}\sum_{i=1}^{N}X_{i}%
X_{i}^{\top}\right)  <\lambda_{\min}\left(  \Sigma\right)  -z\right]
\label{oth}\\
&  \leq\mathrm{P}\left[  \left\Vert \frac{1}{N}\sum_{i=1}^{N}X_{i}X_{i}^{\top
}-\Sigma\right\Vert >z\frac{\lambda_{\min}(\Sigma)}{\lambda_{\max}(\Sigma
)}\right]  .\nonumber
\end{align}
Theorem~5.44 in \cite{MR2963170} implies that for any $s>0,$
\[
\mathrm{P}\left[  \left\Vert \frac{1}{N}\sum_{i=1}^{N}X_{i}X_{i}^{\top}%
-\Sigma\right\Vert >\max\left\{  \Vert\Sigma\Vert^{1/2}\sqrt{\frac{s^{2}M^{2}%
}{N}},\frac{s^{2}M^{2}}{N}\right\}  \right]  \leq d\cdot\exp(-Cs^{2}),
\]
where $C$ is an absolute constant. For $N$ such that $s^{2}M/N\leq\Vert
\Sigma\Vert$ and $s=\sqrt{C^{-1}\log(2d/\delta)},$ we so obtain
\begin{gather}
\mathrm{P}\left[  \left\Vert \frac{1}{N}\sum_{i=1}^{N}X_{i}X_{i}^{\top}%
-\Sigma\right\Vert >M\sqrt{\frac{\Vert\Sigma\Vert\log(2d/\delta)}{NC}}\right]
\leq\delta/2,\text{ \ \ for }N\text{ such that}\label{oth2}\\
M\sqrt{\frac{\log(2d/\delta)}{NC}}\leq\Vert\Sigma\Vert^{1/2}=\lambda_{\max
}^{1/2}\left(  \Sigma\right)  .\nonumber
\end{gather}
Thus, (\ref{lemed}) follows from (\ref{oth2}) and taking $z=$ $\varepsilon
_{\delta,N}$ given by (\ref{epdel}) in (\ref{oth1}) and (\ref{oth}), respectively.
\end{proof}

\bibliographystyle{plain}
\bibliography{perturbation_analysis,particles,MV_optimal_stop}

\end{document}